\definecolor{labelkey}{rgb}{0,0.08,0.45}
\definecolor{refkey}{rgb}{0,0.6,0.0}
\definecolor{Brown}{rgb}{0.45,0.0,0.05}
\DeclareMathOperator{\weakstarly}{\stackrel{\mathrm{w*}}{\rightharpoondown}}
\newcommand{\scal}[2]{\langle{{#1},{#2}}\rangle}
\newcommand{\RR}{\ensuremath{\mathbb R}}
\newcommand{\RX}{\ensuremath{\,\left]-\infty,+\infty\right]}}
\newcommand{\RXX}{\ensuremath{\,\left[-\infty,+\infty\right]}}
\newcommand{\NN}{\ensuremath{\mathbb N}}
\newcommand{\menge}[2]{\big\{{#1} \mid {#2}\big\}}
\newcommand{\To}{\ensuremath{\rightrightarrows}}
\newcommand{\di}{\ensuremath{\operatorname{dist}}}
\newcommand{\dom}{\ensuremath{\operatorname{dom}}}
\newcommand{\gra}{\ensuremath{\operatorname{gra}}}
\newcommand{\intdom}{\ensuremath{\operatorname{int}\operatorname{dom}}\,}
\newcommand{\inte}{\ensuremath{\operatorname{int}}}
\renewcommand{\phi}{\ensuremath{\varphi}}
\newtheorem{theorem}{Theorem}[section]
\newtheorem{fact}[theorem]{Fact}
\newtheorem{corollary}[theorem]{Corollary}
\newtheorem{proposition}[theorem]{Proposition}
\theoremstyle{plain}{\theorembodyfont{\rmfamily}
}
\theoremstyle{plain}{\theorembodyfont{\rmfamily}
}
\theoremstyle{plain}{\theorembodyfont{\rmfamily}
}
\theoremstyle{plain}{\theorembodyfont{\rmfamily}
\newtheorem{example}[theorem]{Example}}
\theoremstyle{plain}{\theorembodyfont{\rmfamily}
\newtheorem{remark}[theorem]{Remark}}
\theoremstyle{plain}{\theorembodyfont{\rmfamily}
}
\begin{document}


\title{\sffamily{
The sum of a maximal monotone
 operator  of type (FPV) and a maximal monotone operator with full domain\\ is maximal monotone}}

\author{
Liangjin\
Yao\thanks{Mathematics, Irving K.\ Barber School, UBC Okanagan,
Kelowna, British Columbia V1V 1V7, Canada.
E-mail:  \texttt{ljinyao@interchange.ubc.ca}.}}
 \vskip 3mm

\date{August 13, 2010}
\maketitle

\begin{abstract} \noindent
The most important open problem in Monotone Operator Theory
concerns the maximal monotonicity of the sum of two
maximal monotone operators provided that
Rockafellar's constraint qualification holds.

In this paper, we prove the maximal monotonicity of $A+B$
provided that
$A$ and $B$ are maximal monotone operators such that
$\dom A\cap\inte\dom B\neq\varnothing$,
$A+N_{\overline{\dom B}}$ is of type (FPV), and
$\dom A\cap\overline{\dom B}\subseteq\dom B$.
The proof utilizes the
Fitzpatrick function in an essential way.
\end{abstract}

\noindent {\bfseries 2010 Mathematics Subject Classification:}\\
{Primary  47H05;
Secondary
49N15, 52A41, 90C25}

\noindent {\bfseries Keywords:}
Constraint qualification,
convex function,
convex set,
duality mapping,
Fitzpatrick function,
linear relation,
maximal monotone operator,
monotone operator,
monotone operator of type (FPV),
subdifferential operator.

\section{Introduction}

Throughout this paper, we assume that
$X$ is a real Banach space with norm $\|\cdot\|$,
that $X^*$ is the continuous dual of $X$, and
that $X$ and $X^*$ are paired by $\scal{\cdot}{\cdot}$.
Let $A\colon X\To X^*$
be a \emph{set-valued operator} (also known as multifunction)
from $X$ to $X^*$, i.e., for every $x\in X$, $Ax\subseteq X^*$,
and let
$\gra A = \menge{(x,x^*)\in X\times X^*}{x^*\in Ax}$ be
the \emph{graph} of $A$.
Recall that $A$ is  \emph{monotone} if
\begin{equation}
\scal{x-y}{x^*-y^*}\geq 0,\quad \forall (x,x^*)\in \gra A\;\forall (y,y^*)\in\gra A,
\end{equation}
and \emph{maximal monotone} if $A$ is monotone and $A$ has no proper monotone extension
(in the sense of graph inclusion).
Let $A:X\rightrightarrows X^*$ be monotone and $(x,x^*)\in X\times X^*$.
 We say $(x,x^*)$ is \emph{monotonically related to}
$\gra A$ if
\begin{align*}
\langle x-y,y-y^*\rangle\geq0,\quad \forall (y,y^*)\in\gra A.\end{align*}
Let $A:X\rightrightarrows X^*$ be maximal monotone. We say $A$ is
\emph{of type (FPV)} if  for every open convex set $U\subseteq X$ such that
$U\cap \dom A\neq\varnothing$, the implication
\begin{equation*}
x\in U\text{and}\,(x,x^*)\,\text{is monotonically related to $\gra A\cap U\times X^*$}
\Rightarrow (x,x^*)\in\gra A
\end{equation*}
holds.
We say $A$ is a \emph{linear relation} if $\gra A$ is a linear subspace.
Monotone operators have proven to be a key class of objects
in modern Optimization and Analysis; see, e.g.,
the books
\cite{BorVan,BurIus,ButIus,ph,Si,Si2,RockWets,Zalinescu}
and the references therein.
We adopt standard notation used in these books:
$\dom A= \menge{x\in X}{Ax\neq\varnothing}$ is the \emph{domain} of $A$.
Given a subset $C$ of $X$,
$\inte C$ is the \emph{interior} of $C$, and
$\overline{C}$ is the norm \emph{closure} of $C$.
The \emph{indicator function} of $C$, written as $\iota_C$, is defined
at $x\in X$ by
\begin{align}
\iota_C (x)=\begin{cases}0,\,&\text{if $x\in C$;}\\
\infty,\,&\text{otherwise}.\end{cases}\end{align}
We set $\di(x, C)=\inf_{c\in C}\|x-c\|$, for $x\in X$.
If $D\subseteq X$, we set $C-D=\{x-y\mid x\in C, y\in D\}$.
  For every $x\in X$, the normal cone operator of $C$ at $x$
is defined by $N_C(x)= \menge{x^*\in
X^*}{\sup_{c\in C}\scal{c-x}{x^*}\leq 0}$, if $x\in C$; and $N_C(x)=\varnothing$,
if $x\notin C$.
For $x,y\in X$, we set $\left[x,y\right]=\{tx+(1-t)y\mid 0\leq t\leq 1\}$.
 Given $f\colon X\to \RX$, we set
$\dom f= f^{-1}(\RR)$ and
$f^*\colon X^*\to\RXX\colon x^*\mapsto
\sup_{x\in X}(\scal{x}{x^*}-f(x))$ is
the \emph{Fenchel conjugate} of $f$.
If $f$ is convex and $\dom f\neq\varnothing$, then
   $\partial f\colon X\To X^*\colon
   x\mapsto \menge{x^*\in X^*}{(\forall y\in
X)\; \scal{y-x}{x^*} + f(x)\leq f(y)}$
is the \emph{subdifferential operator} of $f$.
We also set $P_X: X\times X^*\rightarrow X\colon (x,x^*)\mapsto x$.
Finally,  the \emph{open unit ball} in $X$ is denoted by
$\mathbb{B}_X= \menge{x\in X}{\|x\|< 1}$, and $\NN=\{1,2,3,\ldots\}$.

Let $A$ and $B$ be maximal monotone operators from $X$ to
$X^*$.
Clearly, the \emph{sum operator} $A+B\colon X\To X^*\colon x\mapsto
Ax+Bx = \menge{a^*+b^*}{a^*\in Ax\;\text{and}\;b^*\in Bx}$
is monotone.
Rockafellar's \cite[Theorem~1]{Rock70} guarantees maximal monotonicity
of $A+B$ under
\emph{Rockafellar's constraint qualification}
$\dom A \cap\intdom B\neq \varnothing$ when $X$ is reflexive
--- this result is often referred to as ``the sum theorem''.
The most famous open problem concerns the maximal monotonicity of $A+B$ in
nonreflexive Banach spaces when Rockafellar's constraint qualification
holds.
See Simons' monograph
\cite{Si2} and \cite{Bor1, Bor2, ZalVoi}
for a comprehensive account of some recent developments.

Now we focus on the  case when $A$ and $B$
satisfy the following three conditions:
$\dom A\cap\inte\dom B\neq\varnothing$,
$A+N_{\overline{\dom B}}$ is of type (FPV),
and $\dom A\cap\overline{\dom B}\subseteq\dom B$.
 We show that the sum $A+B$ is maximal monotone in this setting.
We note in passing that in \cite[Corollary~2.9(a)]{VV},
Verona and Verona derived the same conclusion when $A$
 is the subdifferential operator of a proper lower semicontinuous convex function,
and $ B$ is  maximal monotone
with full domain.
In {\cite[Theorem~3.1]{BWY4}}, it was recently shown that
the sum theorem is true when $A$ is a linear relation and $B$ is the normal
cone operator of a closed convex set. In \cite{Voisei09},
Voisei confirmed  \cite[Theorem~41.5]{Si} that  the sum theorem is also true when $A$ is type of  (FPV)  with convex domain,
and $B$ is the normal
cone operator of a closed convex set.
Our main result, Theorem~\ref{t:main},
generalizes all the above results and it also contains a result due to
Heisler \cite[Remark, page~17]{ph2}
on the sum theorem for two operators with full domain.

The remainder of this paper is organized as follows.
In Section~\ref{s:aux}, we collect auxiliary results for future reference
and for the
reader's convenience.
The main result (Theorem~\ref{t:main}) is proved
in Section~\ref{s:main}.

\section{Auxiliary Results}
\label{s:aux}

\begin{fact}[Rockafellar] \label{f:F4}
\emph{(See {\cite[Theorem~3(b)]{Rock66}},
{\cite[Theorem~18.1]{Si2}}, or
{\cite[Theorem~2.8.7(iii)]{Zalinescu}}.)}\\
Let $f,g: X\rightarrow\RX$ be proper convex functions.
Assume that there exists a point $x_0\in\dom f \cap \dom g$
such that $g$ is continuous at $x_0$.
Then  $\partial (f+g)=\partial f+\partial g$.
\end{fact}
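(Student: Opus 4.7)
The inclusion $\partial f + \partial g \subseteq \partial(f+g)$ is immediate from the definition: given $x^* \in \partial f(x)$ and $y^* \in \partial g(x)$, summing the two supporting inequalities at $x$ shows $x^* + y^* \in \partial(f+g)(x)$. The substance of the fact is the reverse inclusion.

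For the reverse inclusion, my plan is to first reduce to a Fermat condition and then use a geometric Hahn--Banach separation. Given $z^* \in \partial(f+g)(x)$, replace $f$ by the shifted function $\tilde f := f - \langle \cdot, z^*\rangle$. This leaves $\dom f$ unchanged, preserves convexity and properness, preserves the hypothesis on $g$, and reduces the claim to: if $x$ is a global minimizer of $f+g$, then $0 \in \partial f(x) + \partial g(x)$.

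For this reduced statement, introduce the convex sets
\[
C_1 := \{(y,r)\in X\times\RR \mid f(y)-f(x) \leq r\}
\qquad\text{and}\qquad
C_2 := \{(y,r)\in X\times\RR \mid r \leq g(x)-g(y)\}.
\]
The minimality of $x$ forces $C_1 \cap \inte C_2 = \varnothing$, because a point in the intersection would yield $(f+g)(y) < (f+g)(x)$. Continuity of $g$ at $x_0$ ensures $\inte C_2 \neq \varnothing$: it implies $x_0 \in \inte\dom g$ and the boundedness of $g$ near $x_0$, so a neighborhood of any $(x_0,r_0)$ with $r_0 < g(x)-g(x_0)$ sits in $C_2$. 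Geometric Hahn--Banach then produces a nonzero pair $(x^*,\alpha) \in X^*\times\RR$ and a constant $c\in\RR$ with $\langle y,x^*\rangle + \alpha r \geq c$ on $C_1$ and $\langle y,x^*\rangle + \alpha r \leq c$ on $C_2$.

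Extracting the two subgradients from the separation is the final step. The unboundedness of $r$ above in $C_1$ forces $\alpha \geq 0$; moreover $\alpha = 0$ would make $x^*$ separate $\dom f$ from $\dom g$, which is impossible since $x_0 \in \inte\dom g$ (any linear functional bounded on a neighborhood of an interior point must vanish). Thus $\alpha > 0$, and after rescaling to $\alpha = 1$ the identity $c = \langle x,x^*\rangle$ follows from $(x,0)\in C_1\cap C_2$. Substituting the boundary points $(y, f(y)-f(x)) \in C_1$ and $(y, g(x)-g(y)) \in C_2$ into the two separation inequalities directly yields $-x^* \in \partial f(x)$ and $x^* \in \partial g(x)$, hence $0 \in \partial f(x) + \partial g(x)$. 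I expect the main obstacle to be the non-degeneracy step ruling out $\alpha = 0$: this is precisely where the continuity of $g$ at $x_0$ enters in its strongest form, and the rest of the argument is essentially bookkeeping around the separation.
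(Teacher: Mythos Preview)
The paper does not prove this fact; it is stated as a ``Fact'' with citations to Rockafellar, Simons, and Z\u{a}linescu, so there is no in-paper argument to compare against. Your proof is correct and follows one of the classical routes to the sum rule: reduce to a Fermat condition, then separate the epigraph of $f-f(x)$ from the hypograph of $g(x)-g$ in $X\times\RR$.

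One phrasing point worth tightening: in ruling out $\alpha=0$, the parenthetical ``any linear functional bounded on a neighborhood of an interior point must vanish'' is not literally true (every continuous functional is bounded on every ball). What you actually use is that $x_0$ lies in \emph{both} $\dom f$ and $\inte\dom g$, so the separating functional satisfies $\langle x_0,x^*\rangle=c$ and simultaneously $\langle x_0+h,x^*\rangle\leq c$ for all small $h$, forcing $x^*=0$ and hence $(x^*,\alpha)=0$, a contradiction. The argument is right; just state it as ``no nonzero functional can separate two convex sets when one meets the interior of the other.'' Otherwise the proof is clean and self-contained, which is more than the paper itself provides.
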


\begin{fact}\emph{(See \cite[Theorem~2.28]{ph}.)}
\label{pheps:11}Let $A:X\To X^*$ be  monotone with $\inte\dom A\neq\varnothing$.
Then $A$ is locally bounded at $x\in\inte\dom A$, i.e., there exist $\delta>0$ and $K>0$ such that
\begin{align*}\sup_{y^*\in Ay}\|y^*\|\leq K,\quad \forall y\in (x+\delta \mathbb{B_X})\cap \dom A.
\end{align*}
\end{fact}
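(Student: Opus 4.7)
The plan is to reduce the claim to the continuity of a single convex lower semicontinuous function and then invoke Baire category. After translating, I may assume $x=0\in\inte\dom A$; in particular $0\in\dom A$, so I pick $x_0^*\in A(0)$ and choose $r>0$ with $r\mathbb{B}_X\subseteq\dom A$. I then introduce the marginal function
$$\phi(u) := \sup\bigl\{\langle u-y,y^*\rangle : (y,y^*)\in\gra A,\ \|y\|\leq r\bigr\},$$
which is convex and lower semicontinuous as a pointwise supremum of continuous affine functions of $u$.

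The crucial step is to check that $\phi$ is finite on all of $\dom A$, hence on the open neighborhood $r\mathbb{B}_X$ of $0$. For $u\in\dom A$ and any $u^*\in Au$, monotonicity of $A$ gives $\langle u-y,u^*-y^*\rangle\geq 0$ for every $(y,y^*)\in\gra A$, which rearranges to $\langle u-y,y^*\rangle\leq\langle u-y,u^*\rangle\leq(\|u\|+r)\|u^*\|$; the right-hand side is independent of $(y,y^*)$, so $\phi(u)<\infty$. In particular $r\mathbb{B}_X\subseteq\dom\phi$, so $0\in\inte\dom\phi$. I then apply the Baire category theorem on the open (hence Baire) ball $r\mathbb{B}_X$ to the closed convex sets $\{\phi\leq n\}\cap r\mathbb{B}_X$, whose union is $r\mathbb{B}_X$: one of them, say $\{\phi\leq n_0\}\cap r\mathbb{B}_X$, must have nonempty interior in $X$. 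By the standard principle that a convex function bounded above on a neighborhood of an interior point of its domain is continuous throughout that interior, $\phi$ is continuous at $0$.

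Consequently there exist $\delta\in(0,r]$ and $K>0$ with $\phi(u)\leq K$ whenever $\|u\|\leq\delta$. To extract the desired bound on $\|y^*\|$, I fix $y\in(\delta/2)\mathbb{B}_X\cap\dom A$, $y^*\in Ay$ and $h\in\mathbb{B}_X$. The point $u:=y+(\delta/2)h$ lies in $\delta\mathbb{B}_X$, and using the pair $(y,y^*)$ as a witness in the supremum defining $\phi(u)$ (note $\|y\|\leq\delta/2\leq r$) gives $\tfrac{\delta}{2}\langle h,y^*\rangle = \langle u-y,y^*\rangle \leq \phi(u) \leq K$, so that $\|y^*\|\leq 2K/\delta$ upon taking the supremum over $h\in\mathbb{B}_X$.

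The main obstacle is establishing the finiteness of $\phi$ on a neighborhood of $0$: this is precisely where monotonicity of $A$ is put to work, via an arbitrary anchor pair $(u,u^*)\in\gra A$ used to control the supremum uniformly. Once that is in place, the remainder of the argument is a routine Baire-plus-convex-continuity calculation.
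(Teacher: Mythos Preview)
The paper does not supply its own proof of this statement; it is recorded as a \emph{Fact} with a citation to Phelps' monograph and no argument given. So there is nothing in the paper to compare against.

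Your argument is correct and is in fact the classical one (originally due to Rockafellar, and essentially the proof appearing in the cited reference): define the convex lower semicontinuous marginal function $\phi$, use monotonicity to bound it on $\dom A$ (hence on a ball), invoke Baire on the sublevel sets to get boundedness above on an open set, deduce continuity at $0$, and then extract the norm bound on $y^*$ by testing $\phi$ at $u=y+(\delta/2)h$. Two small cosmetic remarks: the element $x_0^*\in A(0)$ you select is never used and can be dropped; and when you write ``$\phi(u)\leq K$ whenever $\|u\|\leq\delta$'' you should, strictly speaking, have a strict inequality on $\|u\|$ (or shrink $\delta$ slightly), since continuity only gives a bound on an open ball---this has no effect on the rest of the argument.
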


\begin{fact}[Fitzpatrick]
\emph{(See {\cite[Corollary~3.9]{Fitz88}}.)}
\label{f:Fitz}
Let $A\colon X\To X^*$ be maximal monotone,  and set
\begin{equation}
F_A\colon X\times X^*\to\RX\colon
(x,x^*)\mapsto \sup_{(a,a^*)\in\gra A}
\big(\scal{x}{a^*}+\scal{a}{x^*}-\scal{a}{a^*}\big),
\end{equation}
 the \emph{Fitzpatrick function} associated with $A$.
Then for every $(x,x^*)\in X\times X^*$, the inequality
$\scal{x}{x^*}\leq F_A(x,x^*)$ is true,
and the equality holds if and only if $(x,x^*)\in\gra A$.
\end{fact}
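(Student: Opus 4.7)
The plan is to split the argument into two cases according to whether $(x,x^*)\in\gra A$ or not, and to extract the inequality and the equality characterization simultaneously.

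First I would treat the case $(x,x^*)\in\gra A$. For every $(a,a^*)\in\gra A$, monotonicity gives $\scal{x-a}{x^*-a^*}\geq 0$, which upon expansion is
\[
\scal{x}{a^*}+\scal{a}{x^*}-\scal{a}{a^*}\leq\scal{x}{x^*}.
\]
Taking the supremum over $(a,a^*)\in\gra A$ yields $F_A(x,x^*)\leq\scal{x}{x^*}$. On the other hand, since $(x,x^*)$ is itself admissible in the defining supremum, plugging in $(a,a^*):=(x,x^*)$ produces $F_A(x,x^*)\geq\scal{x}{x^*}+\scal{x}{x^*}-\scal{x}{x^*}=\scal{x}{x^*}$. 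Hence $F_A(x,x^*)=\scal{x}{x^*}$ in this case.

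Next I would handle $(x,x^*)\notin\gra A$, where maximality does the essential work. By definition, if $A$ is maximal monotone then $(x,x^*)$ cannot be monotonically related to $\gra A$ without already belonging to it; therefore there exists at least one witness $(a_0,a_0^*)\in\gra A$ with $\scal{x-a_0}{x^*-a_0^*}<0$. Rearranging,
\[
\scal{x}{a_0^*}+\scal{a_0}{x^*}-\scal{a_0}{a_0^*}>\scal{x}{x^*},
\]
and since the left-hand side is a particular value entering the supremum defining $F_A(x,x^*)$, we conclude $F_A(x,x^*)>\scal{x}{x^*}$. Combining the two cases delivers the inequality $\scal{x}{x^*}\leq F_A(x,x^*)$ on all of $X\times X^*$ together with the assertion that equality forces $(x,x^*)\in\gra A$, and the converse implication was settled in the first case.

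There is no real obstacle, only a point of care: the strictness of the inequality in the second case is exactly where maximality, and not merely monotonicity, is invoked. Without maximality the production of the witness $(a_0,a_0^*)$ fails, and one cannot rule out equality outside $\gra A$; this is the feature that makes $F_A$ such a useful surrogate for the graph of $A$ in the subsequent arguments of the paper.
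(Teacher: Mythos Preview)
Your argument is correct and is the standard proof of this result. Note, however, that the paper does not supply its own proof of this statement: it is recorded as a \emph{Fact} with a reference to Fitzpatrick's original paper \cite[Corollary~3.9]{Fitz88}, so there is no in-paper proof to compare against. Your derivation is exactly the one underlying that reference---monotonicity yields the upper bound on the graph, membership in the graph gives the matching lower bound, and maximality produces a witness forcing strict inequality off the graph---so nothing is missing or unorthodox.
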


\begin{fact}
\emph{(See \cite[Theorem~3.4 and Corollary~5.6]{Voi1}, or \cite[Theorem~24.1(b)]{Si2}.)}
\label{f:referee1}
Let $A, B:X\To X^*$ be maximal monotone operators. Assume
$\bigcup_{\lambda>0} \lambda\left[P_X(\dom F_A)-P_X(\dom F_B)\right]$
is a closed subspace.
If
\begin{equation}
F_{A+B}\geq\langle \cdot,\,\cdot\rangle\;\text{on \; $X\times X^*$},
\end{equation}
then $A+B$ is maximal monotone.
\end{fact}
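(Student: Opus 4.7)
The plan is to derive maximal monotonicity of $M := A+B$ directly from the Fitzpatrick characterization. Let $(x, x^*) \in X \times X^*$ be monotonically related to $\gra M$; the aim is to conclude $(x, x^*) \in \gra M$. Unravelling the definition of $F_M$, monotone relation translates into $\langle x, m^*\rangle + \langle m, x^*\rangle - \langle m, m^*\rangle \leq \langle x, x^*\rangle$ for every $(m, m^*) \in \gra M$, and taking the supremum over $\gra M$ yields $F_M(x, x^*) \leq \langle x, x^*\rangle$. The standing hypothesis $F_M \geq \langle\cdot,\cdot\rangle$ then forces $F_M(x, x^*) = \langle x, x^*\rangle$, so the problem reduces to producing $a^* \in Ax$ and $b^* \in Bx$ with $a^* + b^* = x^*$ out of this Fitzpatrick equality.

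The crux is to establish the exact partial infimal-convolution formula
\begin{equation*}
F_M(y, y^*) \;=\; \min_{a^* + b^* = y^*}\bigl( F_A(y, a^*) + F_B(y, b^*) \bigr),
\end{equation*}
at least at $(y, y^*) = (x, x^*)$. The inequality $\leq$ is elementary: for any splitting $y^* = a^* + b^*$ and any $(m, m_A^* + m_B^*) \in \gra M$ with $m_A^* \in Am$ and $m_B^* \in Bm$, the inner expression defining $F_M$ splits additively into one piece bounded by $F_A(y, a^*)$ and another bounded by $F_B(y, b^*)$. The harder direction is the exactness — equality together with attainment of the minimum — and this is a Fenchel--Rockafellar statement about the partial second-variable infimal convolution of $F_A$ and $F_B$. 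The closed-subspace assumption on $\bigcup_{\lambda > 0} \lambda\bigl[P_X(\dom F_A) - P_X(\dom F_B)\bigr]$ is precisely the Attouch--Br\'ezis-type constraint qualification that rules out a duality gap and delivers the minimizer.

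Granting the exact formula at $(x, x^*)$, pick the minimizing pair $a^* + b^* = x^*$, so that $F_A(x, a^*) + F_B(x, b^*) = F_M(x, x^*) = \langle x, x^*\rangle = \langle x, a^*\rangle + \langle x, b^*\rangle$. Fact~\ref{f:Fitz} gives $F_A(x, a^*) \geq \langle x, a^*\rangle$ and $F_B(x, b^*) \geq \langle x, b^*\rangle$, so both inequalities are equalities; the same fact then forces $(x, a^*) \in \gra A$ and $(x, b^*) \in \gra B$, whence $(x, x^*) \in \gra(A+B)$, as required. The main obstacle is the middle paragraph: the exactness step lives in $X \times X \times X^* \times X^*$ and must be carried out without reflexivity, so the projections $P_X(\dom F_A), P_X(\dom F_B)$ to $X$ arise as the natural \emph{primal} variables after eliminating the dual blocks, and the closed-subspace form of the constraint qualification is exactly what is needed to invoke an Attouch--Br\'ezis-type theorem in that nonreflexive setting.
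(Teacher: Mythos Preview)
The paper does not prove this fact; it is quoted from \cite{Voi1} and \cite{Si2}. Your reduction to $F_{A+B}(x,x^*)=\langle x,x^*\rangle$ is correct, and so is the idea of recovering $a^*\in Ax$, $b^*\in Bx$ by splitting through the Fitzpatrick functions, but the ``crux'' paragraph has a genuine gap. The elementary inequality you establish is
\[
F_{A+B}(y,y^*)\ \le\ \inf_{a^*+b^*=y^*}\bigl(F_A(y,a^*)+F_B(y,b^*)\bigr),
\]
and this is the wrong direction for your purpose: combined with $F_{A+B}(x,x^*)=\langle x,x^*\rangle$ it yields only the vacuous lower bound $\inf\ge\langle x,x^*\rangle$, which holds for every $(x,x^*)$ anyway. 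You then assert that the reverse inequality, together with attainment, is an Attouch--Br\'ezis consequence of the $P_X$-qualification, but this is a mismatch: exactness of an infimal convolution in the \emph{second} ($X^*$-)variable is governed by a qualification on the $X^*$-projections of $\dom F_A$ and $\dom F_B$, whereas the hypothesis concerns the $X$-projections. Neither attainment nor the missing inequality is delivered by the constraint qualification as you are invoking it, and without it the equality $F_{A+B}=\min\bigl(F_A+F_B\bigr)$ is simply not available --- the right-hand side is merely one member of the Fitzpatrick family of $A+B$, with $F_{A+B}$ the smallest.

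In the proofs of \cite{Voi1} and \cite[Theorem~24.1]{Si2} the Fenchel duality is run on the conjugate side --- through $F_A^{*}$ and $F_B^{*}$, or equivalent representative functions --- and it is for \emph{that} problem that the hypothesis on $P_X(\dom F_A)-P_X(\dom F_B)$ is exactly the required Attouch--Br\'ezis condition. The exact dual decomposition so obtained, combined with $F_{A+B}\ge\langle\cdot,\cdot\rangle$, forces equality in a Fenchel--Young inequality at $(x,x^*)$ and produces the desired splitting. Your closing remark about the $P_X$-projections arising ``after eliminating the dual blocks'' gestures at this, but as written the decomposition is set up on the wrong side of the duality for the given constraint qualification to bite.
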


\begin{fact}[Simons]
\emph{(See \cite[Thereom~27.1 and Thereom~27.3]{Si2}.)}
\label{f:referee02c}
Let $A:X\To X^*$ be  maximal monotone with $\inte\dom A\neq\varnothing$. Then
$\inte\dom A=\inte\left[P_X\dom F_A\right]$,
$\overline{\dom A}=\overline{P_X\left[\dom F_A\right]}$ and $\overline{\dom A}$ is convex.
\end{fact}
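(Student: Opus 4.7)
The plan is to prove all three assertions simultaneously by first establishing the identity $\overline{\dom A}=\overline{P_X\dom F_A}$, from which the remaining two statements fall out. The easy inclusions $\dom A\subseteq P_X\dom F_A$ (whence $\inte\dom A\subseteq\inte P_X\dom F_A$ and $\overline{\dom A}\subseteq\overline{P_X\dom F_A}$) are immediate from Fact~\ref{f:Fitz}: for $(a,a^*)\in\gra A$ one has $F_A(a,a^*)=\scal{a}{a^*}<\infty$, so $\gra A\subseteq\dom F_A$.

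The substantive step is the reverse inclusion $P_X\dom F_A\subseteq\overline{\dom A}$. I would argue by contradiction: suppose $(x_0,x_0^*)\in\dom F_A$ but $x_0\notin\overline{\dom A}$. Fix a base point $x_1\in\inte\dom A$ and apply Fact~\ref{pheps:11} to obtain $\delta,K>0$ bounding $\|y^*\|$ for $y^*\in Ay$ with $y\in(x_1+\delta\mathbb{B}_X)\cap\dom A$. Along the segment $x_t=(1-t)x_1+tx_0$, $t\in[0,1]$, let $t^*=\sup\{t\colon x_t\in\overline{\dom A}\}$, so that $0<t^*<1$. The idea is to test the Fitzpatrick supremum
\[
F_A(x_0,x_0^*)=\sup_{(a,a^*)\in\gra A}\bigl(\scal{x_0}{a^*}+\scal{a}{x_0^*}-\scal{a}{a^*}\bigr)
\]
against graph elements obtained by separating $x_0$ from $\overline{\dom A}$ via Hahn--Banach and then scaling: a direction $y^*\in X^*$ with $\scal{x_0-y}{y^*}\ge\varepsilon>0$ for every $y\in\overline{\dom A}$, combined with maximality and monotonicity near $x_{t^*}$, should produce a sequence $(a_n,a_n^*)\in\gra A$ with $\scal{x_0-a_n}{a_n^*}\to\pinf$ while $\scal{a_n}{x_0^*}$ stays bounded. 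This forces $F_A(x_0,x_0^*)=\pinf$, contradicting $(x_0,x_0^*)\in\dom F_A$.

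Once $\overline{\dom A}=\overline{P_X\dom F_A}$ is in hand, convexity of $\overline{\dom A}$ is automatic: $F_A$ is convex, so $\dom F_A$ is convex, so the linear image $P_X\dom F_A$ is a convex subset of $X$, and the closure of a convex set is convex. For the interior equality, let $x_0\in\inte P_X\dom F_A$; then some ball $x_0+r\mathbb{B}_X$ lies in $P_X\dom F_A\subseteq\overline{\dom A}$, placing $x_0$ in $\inte\overline{\dom A}$. A second application of Fact~\ref{pheps:11} (now valid throughout $\inte\overline{\dom A}$ in view of convexity) together with maximality of $A$---to upgrade the density of $\dom A$ near $x_0$ into actual membership---should yield $x_0\in\dom A$, and since a whole neighbourhood persists, $x_0\in\inte\dom A$.

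The main obstacle will be the middle step: forcing the Fitzpatrick supremum to diverge for a point $x_0$ outside $\overline{\dom A}$. Because convexity of $\overline{\dom A}$ is not yet available, one cannot naively separate $x_0$ from the full closure; the workaround is to separate $x_0$ from the boundary point $x_{t^*}$, or from $\dom A$ itself, and to use maximality of $A$ to amplify this separation into concrete graph elements whose $a_n^*$ components can be driven arbitrarily large in the separating direction while $a_n$ remains in a bounded region where the local-boundedness estimate at $x_1$ applies. Making this bookkeeping precise---so that the bound from Fact~\ref{pheps:11} controls the $\scal{a_n}{x_0^*}$ term while the separation drives the $\scal{x_0-a_n}{a_n^*}$ term to infinity---is the real subtlety of Simons' argument.
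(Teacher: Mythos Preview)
The paper does not prove this statement at all: it is recorded as a \emph{Fact} with a citation to Simons' monograph (Theorems~27.1 and~27.3), and no argument is given in the paper itself. So there is nothing here to compare your proposal against; you are effectively sketching Simons' result from scratch.

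On its own merits, your outline has the right architecture, and your diagnosis that the inclusion $P_X\dom F_A\subseteq\overline{\dom A}$ is the crux is correct. But the ``obstacle'' you describe --- that one cannot separate $x_0$ from $\overline{\dom A}$ because convexity is not yet known --- is more easily dissolved than your segment-and-boundary workaround suggests. Separate instead from $\overline{\conv}\,\dom A$: if $x_0\notin\overline{\conv}\,\dom A$, pick $y^*$ with $\scal{x_0-a}{y^*}\geq\varepsilon>0$ for all $a\in\dom A$. For each $n$, maximality applied to $(x_0,x_0^*+ny^*)\notin\gra A$ produces $(a_n,a_n^*)\in\gra A$ with $\scal{x_0-a_n}{a_n^*}>\scal{x_0-a_n}{x_0^*+ny^*}$, and then
\[
F_A(x_0,x_0^*)\;\geq\;\scal{x_0-a_n}{a_n^*}+\scal{a_n}{x_0^*}\;>\;\scal{x_0}{x_0^*}+n\varepsilon\;\longrightarrow\;\infty.
\]
No control on $a_n$ is needed; the $\scal{a_n}{x_0^*}$ terms cancel. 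This gives $P_X\dom F_A\subseteq\overline{\conv}\,\dom A$ for \emph{any} maximal monotone $A$. The hypothesis $\inte\dom A\neq\varnothing$ is then used only to collapse $\overline{\conv}\,\dom A$ to $\overline{\dom A}$, via the route you already indicate for step~3: local boundedness on $\inte\conv\dom A$ (a standard strengthening of Fact~\ref{pheps:11}) together with maximality forces $\inte\conv\dom A\subseteq\dom A$, hence $\inte\conv\dom A=\inte\dom A$ and, taking closures, $\overline{\dom A}=\overline{\conv}\,\dom A$. With this reordering your three conclusions follow without the delicate bookkeeping you were anticipating.
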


Now we cite some results on maximal monotone  operators of type (FPV).
\begin{fact}[Simons]
\emph{(See \cite[Theorem~48.4(d)]{Si2}.)}
\label{f:referee0d}
Let $f:X\rightarrow\RX$ be proper, lower semicontinuous, and convex.
Then $\partial f$ is of type (FPV).
\end{fact}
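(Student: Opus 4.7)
My plan is to establish the Fenchel--Young equality $f(x)+f^*(x^*)=\langle x,x^*\rangle$, which, for a proper lsc convex $f$ satisfying $f^{**}=f$, is equivalent to $(x,x^*)\in\gra\partial f$. Fenchel--Young already provides $f(x)+f^*(x^*)\geq\langle x,x^*\rangle$, so the whole problem is to extract the reverse inequality from the (FPV)-type hypothesis.

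To exploit the monotonic relation, fix any $(y,y^*)\in\gra\partial f$ with $y\in U$. Since $y^*\in\partial f(y)$, Fenchel--Young holds with equality, giving $\langle y,y^*\rangle=f(y)+f^*(y^*)$; substituting this into $\langle x-y,x^*-y^*\rangle\geq 0$ yields
\[
\langle x,x^*\rangle\;\geq\;\langle x,y^*\rangle+\langle y,x^*\rangle-f(y)-f^*(y^*).
\]
On the other hand, the unrestricted Fenchel identity reads
\[
f(x)+f^*(x^*)\;=\;\sup_{(y,y^*)\in X\times X^*}\bigl[\langle x,y^*\rangle+\langle y,x^*\rangle-f(y)-f^*(y^*)\bigr].
\]
If we could range $(y,y^*)$ over all of $X\times X^*$ in the first inequality, we would obtain $\langle x,x^*\rangle\geq f(x)+f^*(x^*)$ at once. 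The remaining work is to show that the \emph{restricted} supremum --- over $(y,y^*)\in\gra\partial f$ with $y\in U$ --- still attains $f(x)+f^*(x^*)$ in the limit.

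For this I would use the Br\o ndsted--Rockafellar theorem: any $(\bar y,\bar y^*)\in\dom f\times\dom f^*$ that nearly realizes Fenchel--Young can be norm-approximated by a genuine pair $(y,y^*)\in\gra\partial f$. Applied to near-optimal candidates for the unrestricted supremum, this produces approximations inside $\gra\partial f$. To guarantee $y\in U$, I would select a base point $u_0\in U\cap\dom\partial f$ (nonempty by hypothesis) and convex-combine candidates with $u_0$ along short segments, using openness of $U$ to keep the perturbed candidates inside $U\cap\dom f$ while only slightly degrading near-optimality; a diagonal argument over vanishing perturbation parameters then yields $\langle x,x^*\rangle\geq f(x)+f^*(x^*)$.

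The main obstacle I anticipate is the edge case $x\notin\dom f$, where both sides of Fenchel--Young are $+\infty$ and the whole strategy is vacuous. One must first establish $x\in\dom f$ directly; I would do this by transporting finiteness of $f$ from $u_0\in U\cap\dom f$ to $x$ along the segment $[u_0,x]\subseteq U$, using lower semicontinuity of $f$ combined with an upper bound forced by applying the monotonic relation to subgradients along the interior of the segment. Once $x\in\dom f$ is secured, the Br\o ndsted--Rockafellar density argument above completes the proof.
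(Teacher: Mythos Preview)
The paper does not prove this statement; it is quoted as a Fact with a bare citation to Simons' monograph \cite[Theorem~48.4(d)]{Si2}. So there is no ``paper's own proof'' to compare against, and I can only assess your proposal on its merits.

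Your strategy has a genuine gap at the key step. You reduce everything to showing that the \emph{restricted} supremum
\[
\sup_{\substack{(y,y^*)\in\gra\partial f\\ y\in U}}\bigl[\langle x,y^*\rangle+\langle y,x^*\rangle-f(y)-f^*(y^*)\bigr]
\]
equals the unrestricted one, $f(x)+f^*(x^*)$. But the unrestricted supremum decouples: one must drive $\langle y,x^*\rangle-f(y)$ towards $f^*(x^*)$ and, \emph{independently}, $\langle x,y^*\rangle-f^*(y^*)$ towards $f(x)$. Near-optimal $y$'s for $f^*(x^*)$ may lie arbitrarily far from $U$, and the coupling $y^*\in\partial f(y)$ then forces $y^*$ away from where you need it. Your proposed fix --- convex-combining a far-away candidate $\bar y$ with a base point $u_0\in U$ --- moves $\bar y$ by a non-infinitesimal amount, so it does not ``only slightly degrade near-optimality''; it can destroy it entirely. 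Br\o ndsted--Rockafellar controls only $O(\sqrt{\varepsilon})$ perturbations and cannot pull a distant $\bar y$ into $U$. The edge-case argument for $x\in\dom f$ is likewise only a sketch; getting uniform control of $f$ along $[u_0,x]$ from the monotonic relation alone needs a real argument, not just lower semicontinuity.

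A much cleaner route --- and one entirely within the toolkit the paper already assembles --- is to use Fact~\ref{f:referee02a}: it suffices to check that $\partial f+N_C$ is maximally monotone for every closed convex $C$ with $\dom\partial f\cap\inte C\neq\varnothing$. Pick $x_0\in\dom\partial f\cap\inte C$; then $\iota_C$ is continuous at $x_0\in\dom f$, so Fact~\ref{f:F4} gives $\partial f+N_C=\partial(f+\iota_C)$, and Rockafellar's maximal monotonicity theorem for subdifferentials finishes the job. This avoids the Fenchel--Young supremum bookkeeping altogether.
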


\begin{fact}[Simons]
\emph{(See \cite[Theorem~46.1]{Si2}.)}
\label{f:referee01}
Let $A:X\To X^*$ be a maximal monotone linear relation.
Then $A$ is of type (FPV).
\end{fact}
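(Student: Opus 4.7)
The plan is to prove this by contradiction, exploiting the linearity of $\gra A$ to propagate monotone-relatedness from $\gra A \cap (U \times X^*)$ to all of $\gra A$. First I normalize: since $U \cap \dom A \neq \varnothing$, pick $(c, c^*) \in \gra A$ with $c \in U$. Because $\gra A$ is a linear subspace containing $(c, c^*)$, we have $\gra A - (c, c^*) = \gra A$, so after translating $(x, x^*) \leftarrow (x - c, x^* - c^*)$ and $U \leftarrow U - c$ I may assume $0 \in U$ and $(0, 0) \in \gra A$. Since $A$ is maximal monotone, it then suffices to prove that $(x, x^*)$ is monotonically related to every element of $\gra A$.

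Suppose toward a contradiction that some $(a, a^*) \in \gra A$ satisfies $\langle x - a, x^* - a^* \rangle < 0$. For any $(b, b^*) \in \gra A \cap (U \times X^*)$ and $\mu \in \RR$, linearity of $\gra A$ gives $(b + \mu a, b^* + \mu a^*) \in \gra A$, and whenever $b + \mu a \in U$ the hypothesis forces
\[
q_b(\mu) := \langle x - b - \mu a, x^* - b^* - \mu a^* \rangle \geq 0.
\]
This is a convex quadratic in $\mu$, since $\langle a, a^* \rangle \geq 0$ by monotonicity applied to $(a, a^*)$ and $(0, 0)$. Specializing to $b = 0$ gives $q_0(\mu) \geq 0$ on an open neighborhood of $\mu = 0$, while $q_0(1) = \langle x - a, x^* - a^* \rangle < 0$. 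The aim is then either to produce some $(y, y^*) \in \gra A \cap (U \times X^*)$ with $\langle x - y, x^* - y^* \rangle < 0$, contradicting the hypothesis directly, or to establish $F_A(x, x^*) \leq \langle x, x^* \rangle$ and invoke Fact~\ref{f:Fitz} to conclude $(x, x^*) \in \gra A$.

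The hard part will be leveraging the freedom in $b$ to extend the region on which $q_b(\mu) \geq 0$: the scaling argument with fixed $b = 0$ only gives the inequality for $\mu$ in a small neighborhood of $0$ (because $\mu a$ must lie in $U$), so one cannot directly evaluate at $\mu = 1$. To push through, I would vary the base point $b$ over $\gra A \cap (U \times X^*)$ and chain overlapping intervals of admissible $\mu$-values, using convexity of $U$ and the subspace structure of $\dom A$ to travel along the $a$-direction through a sequence of translates in $\gra A$. The two sub-cases $\langle a, a^* \rangle = 0$ (where $q_0$ is linear and boundary considerations on the interval $\{\mu : \mu a \in U\}$ suffice) and $\langle a, a^* \rangle > 0$ (where the strict convexity of $q_0$ combines with an AM--GM estimate relating $\alpha := \langle x, a^* \rangle + \langle a, x^* \rangle$ to $\langle x, x^* \rangle$ and $\langle a, a^* \rangle$) need to be treated separately, but in both the linearity of $\gra A$ provides enough rescaling and translation freedom to complete the argument.
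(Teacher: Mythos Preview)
The paper does not give its own proof of this statement: it is recorded as a Fact with a bare citation to \cite[Theorem~46.1]{Si2}. So there is no in-paper argument to compare your proposal against.

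Evaluating your sketch on its own merits: the normalization is fine, and you have correctly isolated the genuine difficulty --- knowing that the convex quadratic $q_0(\mu)=\langle x,x^*\rangle-\alpha\mu+\langle a,a^*\rangle\,\mu^2$ is nonnegative for $\mu$ in a small neighbourhood of $0$ does \emph{not} force $q_0(1)\geq 0$ (take $q_0(\mu)=1-10\mu+\mu^2$). The problem is that your proposed resolutions do not close this gap. The ``chaining'' idea of moving the base point $b$ along the $a$-direction cannot leave the open interval $I=\{t\in\RR:ta\in U\}$; when $a\notin U$ this interval has a finite right endpoint strictly less than $1$, and no finite chain of translates $b=t_k a$ with $t_k\in I$ will ever reach $\mu=1$. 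The AM--GM estimate you invoke --- bounding $\alpha=\langle x,a^*\rangle+\langle a,x^*\rangle$ by $\langle x,x^*\rangle+\langle a,a^*\rangle$ --- is exactly the inequality $q_0(1)\geq 0$ you are trying to prove, so appealing to it is circular.

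What is missing is a substantive use of the hypothesis $x\in U$ (so far you only use $0\in U$) together with elements of $\gra A$ that are \emph{not} scalar multiples of $(a,a^*)$. The easy case $a=0$, $a^*\neq 0$ is a hint: there $(0,t a^*)\in\gra A\cap(U\times X^*)$ for every $t\in\RR$, which immediately forces $\langle x,a^*\rangle=0$ and gives the conclusion --- the unboundedness in the $X^*$-coordinate (which $U$ does not constrain) does all the work. A complete argument for general $(a,a^*)$ must exploit this same freedom in the fibre direction, or else bring in a different mechanism altogether (as Simons does); restricting to the one-parameter family $\{\mu(a,a^*)\}$ and its translates along the $a$-axis is not enough.
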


\begin{fact}[Simons and Verona-Verona]
\emph{(See \cite[Thereom~44.1]{Si2} or \cite{VV1}.)}
\label{f:referee02a}
Let $A:X\To X^*$ be a maximal monotone. Suppose that
for every closed convex subset $C$ of $X$
with $\dom A \cap \inte C\neq \varnothing$, the operator
$A+N_C$ is maximal monotone.
Then $A$ is of type  (FPV).
\end{fact}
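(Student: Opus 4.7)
The plan is as follows. Given an open convex $U \subseteq X$ with $\dom A \cap U \neq \varnothing$, a point $x \in U$, and $(x,x^*)$ monotonically related to $\gra A \cap (U \times X^*)$, I want to conclude $(x,x^*) \in \gra A$. The strategy is to construct a closed convex $C$ with $x \in \inte C$, $\dom A \cap \inte C \neq \varnothing$, and $C \subseteq U$; then the hypothesis makes $A + N_C$ maximal monotone, and I transfer the monotone relation of $(x,x^*)$ from $\gra A \cap (U \times X^*)$ to $\gra(A + N_C)$. By maximality this forces $x^* \in Ax + N_C(x)$, and $x \in \inte C$ kills the normal cone, delivering $x^* \in Ax$.

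The main obstacle is building $C$. The naive choice $C := \overline{U}$ fails: a boundary point $y \in (\overline{U}\setminus U) \cap \dom A$ carries graph elements $(y,y^*) \in \gra A$ for which the hypothesis supplies no control on $\langle x-y, x^*-y^*\rangle$, and then the $A$-part of the monotone-relation computation for $A + N_C$ can collapse at such $y$. To avoid the boundary I shrink: for $\epsilon > 0$ set
\begin{equation*}
V_\epsilon := \{y \in X : \di(y, X\setminus U) > \epsilon\}, \qquad C_\epsilon := \overline{V_\epsilon}.
\end{equation*}
Convexity of $V_\epsilon$ follows from convexity of $U$ by a standard open-ball convex-combination argument, and openness is immediate from continuity of $\di(\cdot, X\setminus U)$, so $V_\epsilon \subseteq \inte C_\epsilon$. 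The $1$-Lipschitz property of $\di(\cdot,X\setminus U)$ passes the strict inequality to the closure in the weak form $\di(y,X\setminus U) \geq \epsilon > 0$ on $C_\epsilon$, which forces $C_\epsilon \subseteq U$.

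Fixing any $x_0 \in \dom A \cap U$ and taking $0 < \epsilon < \min\{\di(x, X\setminus U), \di(x_0, X\setminus U)\}$ (both positive by openness of $U$), one has $x, x_0 \in V_\epsilon \subseteq \inte C_\epsilon$. Thus $x \in \inte C_\epsilon$ and the constraint qualification $\dom A \cap \inte C_\epsilon \neq \varnothing$ hold, and the hypothesis yields that $A + N_{C_\epsilon}$ is maximal monotone.

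The remaining verification is routine. For any $(y, y^* + z^*) \in \gra(A + N_{C_\epsilon})$ with $y^* \in Ay$ and $z^* \in N_{C_\epsilon}(y)$, the inclusion $y \in C_\epsilon \subseteq U$ activates the given monotone relation to yield $\langle x - y, x^* - y^*\rangle \geq 0$, while $x \in C_\epsilon$ combined with the normal cone inequality gives $\langle x - y, z^*\rangle \leq 0$. Adding these yields $\langle x - y, x^* - (y^* + z^*)\rangle \geq 0$, so $(x, x^*)$ is monotonically related to $\gra(A + N_{C_\epsilon})$; maximality places $(x, x^*) \in \gra(A + N_{C_\epsilon})$; and $x \in \inte C_\epsilon$ collapses $N_{C_\epsilon}(x)$ to $\{0\}$, delivering $(x, x^*) \in \gra A$.
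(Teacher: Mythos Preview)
The paper does not supply its own proof of this statement; it is recorded as a cited fact from \cite[Theorem~44.1]{Si2} and \cite{VV1}. Your argument is correct and is essentially the standard proof of this implication: given the open convex $U$, you manufacture a closed convex $C_\epsilon\subseteq U$ with $x\in\inte C_\epsilon$ and $\dom A\cap\inte C_\epsilon\neq\varnothing$, invoke the hypothesis to obtain maximality of $A+N_{C_\epsilon}$, push the assumed monotone relation through the normal-cone term using $x\in C_\epsilon$ and $\dom(A+N_{C_\epsilon})\subseteq C_\epsilon\subseteq U$, and finally kill the normal cone at the interior point $x$. The shrinking device $V_\epsilon=\{y:\di(y,X\setminus U)>\epsilon\}$ is the standard way to get a closed convex subset of $U$ with the required interior properties; your verification of its convexity and of $C_\epsilon\subseteq U$ via the $1$-Lipschitz property of $\di(\cdot,X\setminus U)$ is fine. (The borderline case $U=X$, where $\di(\cdot,\varnothing)\equiv+\infty$, makes $C_\epsilon=X$ and the conclusion follows directly from maximality of $A$; your construction handles it uniformly under that convention.)
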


The following statement first appeared in \cite[Theorem~41.5]{Si}.
However, on \cite[page~199]{Si2}, concerns were raised about the validity
of the proof of \cite[Theorem~41.5]{Si}.
In \cite{Voisei09},
Voisei recently provided a result that generalizes
and confirms \cite[Theorem~41.5]{Si} and hence the following fact.

\begin{fact}[Voisei]
\label{ft:maina}
Let $A:X\To X^*$ be  maximal monotone of type  (FPV)  with convex domain,
let $C$ be a nonempty closed convex subset of $X$,
and suppose that $\dom A \cap \inte C\neq \varnothing$.
Then $A+N_C$ is maximal monotone.
\end{fact}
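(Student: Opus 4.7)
The plan is to reduce to showing the Fitzpatrick inequality $F_{A+N_C}(z,z^*)\ge\langle z,z^*\rangle$ on all of $X\times X^*$ and then to invoke Fact~\ref{f:referee1}. The constraint qualification is immediate: pick $x_0\in\dom A\cap\inte C$ and $\varepsilon>0$ with $x_0+\varepsilon\mathbb{B}_X\subseteq C$; since $\dom A\subseteq P_X(\dom F_A)$ and $C=\dom N_C\subseteq P_X(\dom F_{N_C})$, the set $P_X(\dom F_A)-P_X(\dom F_{N_C})$ contains $-\varepsilon\mathbb{B}_X$, so $\bigcup_{\lambda>0}\lambda\bigl[P_X(\dom F_A)-P_X(\dom F_{N_C})\bigr]=X$, trivially a closed subspace.

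Argue by contradiction: suppose some $(z,z^*)\in X\times X^*$ satisfies $F_{A+N_C}(z,z^*)<\langle z,z^*\rangle$, equivalently there exists $\alpha>0$ with $\langle z-x,z^*-x^*\rangle\ge\alpha$ for every $(x,x^*)\in\gra(A+N_C)$. First show $z\in C$: if $z\notin C$, strictly separate $z$ from $C$ by some $u^*\in X^*$ with $\sup_{c\in C}\langle c,u^*\rangle<\langle z,u^*\rangle$; then, using convexity of $\dom A$ together with a Bishop--Phelps-type density of support points, produce a point $c\in\dom A\cap\bd C$ and a nonzero $n^*\in N_C(c)$ pointing roughly in the direction of $u^*$ so that $\langle z-c,n^*\rangle>0$; testing the $\alpha$-bound against $(c,a^*+tn^*)\in\gra(A+N_C)$ with $a^*\in Ac$ and $t\to\infty$ violates $\langle z-c,z^*-a^*-tn^*\rangle\ge\alpha$, the desired contradiction. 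Hence $z\in C$.

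If $z\in\inte C$, take $U:=\inte C$; this is an open convex set containing $z$ and $x_0\in U\cap\dom A$, and for every $(x,a^*)\in\gra A\cap(U\times X^*)$ one has $N_C(x)=\{0\}$, hence $(x,a^*)\in\gra(A+N_C)$. The $\alpha$-strict monotone relatedness of $(z,z^*)$ therefore transfers to $\gra A\cap(U\times X^*)$, and the type~(FPV) property of $A$ yields $(z,z^*)\in\gra A$; since $z\in\inte C$ gives $0\in N_C(z)$, we conclude $(z,z^*)\in\gra(A+N_C)$, contradicting $F_{A+N_C}(z,z^*)<\langle z,z^*\rangle$ by Fact~\ref{f:Fitz}.

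The remaining case $z\in\bd C$ is the main obstacle: here $N_C(z)$ is nontrivial and $z^*$ must be decomposed as $a^*+n^*$ with $a^*\in Az$ and $n^*\in N_C(z)$. The plan is to produce a candidate $n^*\in N_C(z)$ via a suitable support functional of $C$ at $z$ (obtained by Hahn--Banach), and to exhibit an open convex set $U$ containing $z$ together with $x_0\in\dom A\cap\inte C$, on which $(z,z^*-n^*)$ becomes monotonically related to $\gra A\cap(U\times X^*)$. The type~(FPV) property of $A$ then delivers $z^*-n^*\in Az$, so $z^*\in Az+N_C(z)=(A+N_C)z$, i.e.\ $(z,z^*)\in\gra(A+N_C)$, the desired contradiction via Fact~\ref{f:Fitz}. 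The delicate step is calibrating $n^*$ so that the adjustment absorbs the normal-cone contributions along the segment from $x_0$ toward $z$ (which lies in $\dom A\cap\inte C$ by convexity of $\dom A$ and the line-segment principle) and allows the monotone relatedness to persist as $U$ must reach past $\bd C$ to include a neighbourhood of $z$; this limiting argument, using local boundedness of $A$ on $\inte\dom A$ when applicable (Fact~\ref{pheps:11}) and convexity of $\dom A$ throughout, is the heart of Voisei's analysis.
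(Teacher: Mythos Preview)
Note first that the paper does not prove this statement at all: Fact~\ref{ft:maina} is quoted from Voisei~\cite{Voisei09} as a black box, and the paper's subsequent results (Corollary~\ref{domain:L1}, Proposition~\ref{PGV:1}, Theorem~\ref{t:main}) all rely on it. So there is no ``paper's own proof'' to compare against; what you have written is an attempted independent proof, and it has genuine gaps.

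Your argument for the case $z\notin C$ is incomplete. You invoke ``a Bishop--Phelps-type density of support points'' to produce $c\in\dom A\cap\bd C$ with $n^*\in N_C(c)\smallsetminus\{0\}$ satisfying $\langle z-c,n^*\rangle>0$. Bishop--Phelps gives density of support points in $\bd C$, but it says nothing about such points lying in $\dom A$; in general $\dom A\cap\bd C$ can be empty (for instance when $\dom A\subseteq\inte C$, in which case $A+N_C=A$ and the result is trivial, but your argument does not isolate this). Even when $\dom A\cap\bd C\neq\varnothing$, the simultaneous control of the support direction and membership in $\dom A$ is not provided by Bishop--Phelps. The natural geometric candidate---the exit point of the segment $[x_0,z]$ from $C$---lies in $\bd C$, but you cannot place it in $\dom A$ without knowing $z\in\dom A$, which is exactly what is in doubt. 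This step needs a different idea, typically a careful use of the (FPV) property of $A$ on shrinking open convex tubes around $[x_0,z]$, as in the paper's Proposition~\ref{PGV:1}.

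More seriously, in the case $z\in\bd C$ you do not give a proof at all: you describe a plan (choose $n^*\in N_C(z)$, find $U$, check monotone relatedness of $(z,z^*-n^*)$ to $\gra A\cap(U\times X^*)$, apply (FPV)) and then say ``this limiting argument \ldots\ is the heart of Voisei's analysis''. That is precisely the nontrivial part. The difficulty is real: $U$ must contain $z\in\bd C$, so $U$ meets $X\smallsetminus C$ and you can no longer identify $\gra A\cap(U\times X^*)$ with $\gra(A+N_C)\cap(U\times X^*)$; and there is no a priori candidate for $n^*$ since $N_C(z)$ may be a half-line or larger. Voisei's actual argument does not proceed by guessing $n^*$; it is closer in spirit to the tube-and-(FPV) construction you see in the proof of Proposition~\ref{PGV:1}. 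As written, your proposal is an outline that stops exactly where the work begins.
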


\begin{corollary}\label{domain:L1}
Let $A:X\To X^*$ be  maximal monotone of type  (FPV)  with convex domain,
let $C$ be a nonempty closed convex subset of $X$,
and suppose that $\dom A \cap \inte C\neq \varnothing$.
Then $A+N_C$ is of type  $(FPV)$.

\end{corollary}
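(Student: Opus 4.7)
The plan is to chain together the three facts immediately preceding the statement. Since $A$ is of type (FPV) with convex domain and $\dom A\cap\inte C\neq\varnothing$, Fact~\ref{ft:maina} already yields that $A+N_C$ is maximal monotone. To promote this to type (FPV), I would invoke Fact~\ref{f:referee02a}: it suffices to show that for every nonempty closed convex set $D\subseteq X$ with $\dom(A+N_C)\cap\inte D\neq\varnothing$, the operator $(A+N_C)+N_D$ is maximal monotone.

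The heart of the argument is to recognize that $(A+N_C)+N_D$ equals $A+N_{C\cap D}$, so that Fact~\ref{ft:maina} can be reapplied with $C$ replaced by $C\cap D$. Two verifications are required. First, I need a constraint-qualification point: since $\dom(A+N_C)=\dom A\cap C$, the hypothesis on $D$ supplies some $x_1\in\dom A\cap C\cap\inte D$, while the hypothesis on $A$ and $C$ furnishes $x_0\in\dom A\cap\inte C$. Because $\dom A$ is convex, the segment $[x_0,x_1]$ lies in $\dom A$; standard convex-set facts show that $(1-t)x_0+tx_1$ lies in $\inte C$ for every $t\in[0,1)$ and, since $x_1\in\inte D$, also in $\inte D$ for all $t$ close to $1$. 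Selecting such a $t<1$ gives $\dom A\cap\inte C\cap\inte D\neq\varnothing$, and hence $\dom A\cap\inte(C\cap D)\neq\varnothing$, which licenses applying Fact~\ref{ft:maina} to the closed convex set $C\cap D$ and yields that $A+N_{C\cap D}$ is maximal monotone.

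Second, to identify $N_C+N_D$ with $N_{C\cap D}$, I would apply Fact~\ref{f:F4} to $f=\iota_C$ and $g=\iota_D$ at the point $x_1\in C\cap\inte D$: there $\iota_D$ is continuous and $\iota_C(x_1)=0$, so $\partial(\iota_C+\iota_D)=\partial\iota_C+\partial\iota_D$, i.e., $N_{C\cap D}=N_C+N_D$. Combining the two verifications, $(A+N_C)+N_D=A+N_{C\cap D}$ is maximal monotone, and Fact~\ref{f:referee02a} concludes that $A+N_C$ is of type (FPV).

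The only delicate point is the constraint-qualification bookkeeping in the second paragraph: one must bootstrap from $\dom A\cap\inte C\neq\varnothing$ and $\dom A\cap C\cap\inte D\neq\varnothing$ to a common interior point in $\dom A\cap\inte C\cap\inte D$, which is precisely where the convexity hypothesis on $\dom A$ is indispensable. Everything else is a mechanical invocation of the cited facts.
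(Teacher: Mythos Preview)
Your proposal is correct and follows essentially the same route as the paper: invoke Fact~\ref{ft:maina} for maximal monotonicity of $A+N_C$, then verify the hypothesis of Fact~\ref{f:referee02a} by identifying $(A+N_C)+N_D$ with $A+N_{C\cap D}$ via Fact~\ref{f:F4} and applying Fact~\ref{ft:maina} again. The paper carries out the constraint-qualification step with explicit ball radii, parametrizing the segment from the $\inte D$ point toward the $\inte C$ point, whereas you parametrize in the opposite direction and appeal to the standard ``interior--closure segment'' lemma; the content is identical.
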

\begin{proof} By Fact~\ref{ft:maina}, $A+N_C$ is maximal monotone.
Let $D$ be a nonempty closed convex subset of $X$,
and suppose that $\dom (A+N_C) \cap \inte D\neq \varnothing$.
Let $x_1\in \dom A \cap \inte C$ and $x_2\in \dom (A+N_C) \cap \inte D$.
Thus, there exists   $\delta>0$ such that $x_1+\delta \mathbb{B}_X\subseteq C$
 and $x_2+\delta \mathbb{B}_X\subseteq D$.
Then for small enough $\lambda\in\left]0,1\right[$, we have
 $x_2+\lambda (x_1-x_2)+\tfrac{1}{2}\delta \mathbb{B}_X\subseteq D$.
Clearly, $x_2+\lambda(x_1-x_2)+\lambda\delta \mathbb{B}_X\subseteq C$.
Thus $x_2+\lambda(x_1-x_2)+\tfrac{\lambda\delta}{2} \mathbb{B}_X\subseteq C\cap D$.
Since $\dom A$ is convex, $x_2+\lambda(x_1-x_2)\in\dom A$
 and  $x_2+\lambda(x_1-x_2)\in\dom A\cap\inte(C\cap D)$.
By Fact~\ref{f:F4} , $A+N_C+N_D=A+N_{C\cap D}$.
Then, by Fact~\ref{ft:maina} (applied to $A$ and $C\cap D$),
$A+N_C+N_D=A+N_{C\cap D}$ is maximal monotone.
By Fact~\ref{f:referee02a},   $A+N_C$ is of type  $(FPV)$.
\end{proof}

\begin{corollary}\label{domain:L2}
Let $A:X\To X^*$ be a maximal monotone linear relation,
let $C$ be a nonempty closed convex subset of $X$,
and suppose that $\dom A \cap \inte C\neq \varnothing$.
Then $A+N_C$ is of type  $(FPV)$.

\end{corollary}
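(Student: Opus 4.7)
The plan is very short: this corollary will follow almost immediately from Corollary~\ref{domain:L1} once we verify that a maximal monotone linear relation fits the hypotheses of that corollary.

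First I would observe that since $A$ is a linear relation, $\gra A$ is a linear subspace of $X\times X^*$, and hence $\dom A = P_X(\gra A)$ is a linear subspace of $X$. In particular, $\dom A$ is convex, which is one of the two hypotheses of Corollary~\ref{domain:L1}.

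Next, by Fact~\ref{f:referee01}, every maximal monotone linear relation is of type (FPV). This supplies the second hypothesis of Corollary~\ref{domain:L1}.

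Since we are already assuming $\dom A\cap\inte C\neq\varnothing$ and that $C$ is a nonempty closed convex subset of $X$, all the hypotheses of Corollary~\ref{domain:L1} are satisfied. Applying it directly yields that $A+N_C$ is of type (FPV), which is the desired conclusion. No separate argument is needed, and there is no real obstacle here; the corollary is simply the specialization of Corollary~\ref{domain:L1} to the case where the type (FPV) operator with convex domain happens to be a maximal monotone linear relation.
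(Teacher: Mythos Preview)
Your proof is correct and is exactly the paper's approach: the paper's proof simply reads ``Apply Fact~\ref{f:referee01} and Corollary~\ref{domain:L1},'' and you have merely spelled out the two implicit observations (that $\dom A$ is convex as a linear subspace, and that $A$ is of type (FPV) by Fact~\ref{f:referee01}) needed to invoke Corollary~\ref{domain:L1}.
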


\begin{proof}
Apply Fact~\ref{f:referee01} and Corollary~\ref{domain:L1}.
\end{proof}

\section{Main Result}
\label{s:main}

The following result plays a key role in the proof of Theorem~\ref{t:main}.
The first half of its proof  follows along the lines of the proof of
\cite[Theorem~44.2]{Si2}.

\begin{proposition}\label{PGV:1}
Let $A, B:X\To X^*$ be  maximal monotone
with $\dom A\cap\inte\dom B\neq\varnothing$. Assume
that $A+N_{\overline{\dom B}}$ is maximal monotone of type (FPV),
and
$\dom A\cap\overline{\dom B}\subseteq\dom B$. Then
$\overline{P_X\left[\dom F_{A+B}\right]}=\overline{\dom A\cap\dom B}$.
\end{proposition}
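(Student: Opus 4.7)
The plan is to prove the two inclusions separately. The inclusion $\overline{\dom A \cap \dom B} \subseteq \overline{P_X[\dom F_{A+B}]}$ is immediate: every $(x, x^*) \in \gra(A+B)$ satisfies $F_{A+B}(x, x^*) = \langle x, x^*\rangle < \infty$ by Fact~\ref{f:Fitz}, so $\dom(A+B) = \dom A \cap \dom B \subseteq P_X[\dom F_{A+B}]$, and taking closures gives the claim.

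For the reverse inclusion set $C := \overline{\dom B}$, which is closed and convex by Fact~\ref{f:referee02c} applied to $B$ (whose domain has nonempty interior by Rockafellar's qualification). The standing hypothesis $\dom A \cap C \subseteq \dom B$ gives $\dom(A + N_C) = \dom A \cap \dom B$, so it suffices to show $\overline{P_X[\dom F_{A+B}]} \subseteq \overline{\dom(A + N_C)}$. I would argue by contradiction: fix $(z, z^*) \in \dom F_{A+B}$ with $z \notin \overline{\dom(A+N_C)}$, pick $\delta > 0$ with $(z + \delta \mathbb{B}_X) \cap \dom(A+N_C) = \varnothing$, and pick $x_0 \in \dom A \cap \inte \dom B$ together with $\epsilon > 0$ so that $x_0 + \epsilon \mathbb{B}_X \subseteq \dom B$. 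For each $n \in \NN$, form the open convex set $U_n := \conv\bigl((z + n^{-1}\mathbb{B}_X) \cup (x_0 + \epsilon \mathbb{B}_X)\bigr)$, which contains $z$ and meets $\dom(A+N_C)$ at $x_0$. Since $(z, z^*) \notin \gra(A+N_C)$, the (FPV) property of $A+N_C$ produces $(y_n, y_n^*) \in \gra(A+N_C) \cap (U_n \times X^*)$ with $\langle z - y_n, y_n^* - z^*\rangle > 0$; combined with the constraint $\|y_n - z\| \geq \delta$, a simple geometric computation shows $y_n$ lies in a bounded region approaching the segment $[z, x_0]$ but staying at distance at least $\delta$ from $z$.

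Writing $y_n^* = a_n^* + c_n^*$ with $a_n^* \in A y_n$ and $c_n^* \in N_C(y_n)$, and selecting $b_n^* \in B y_n$ (available because $y_n \in \dom A \cap C \subseteq \dom B$), I would use $(y_n, a_n^* + b_n^*) \in \gra(A+B)$ to obtain
\begin{equation*}
F_{A+B}(z, z^*) \geq \langle y_n, z^*\rangle + \langle z - y_n, a_n^*\rangle + \langle z - y_n, b_n^*\rangle,
\end{equation*}
then combine with the rearranged FPV inequality $\langle z - y_n, a_n^*\rangle > \langle z - y_n, z^* - c_n^*\rangle$ to push $F_{A+B}(z, z^*)$ toward $+\infty$ as $n \to \infty$, contradicting $(z, z^*) \in \dom F_{A+B}$.

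The main obstacle is controlling the residual discrepancy $\langle z - y_n, b_n^* - c_n^*\rangle$ that arises when the normal-cone component $c_n^*$ of $y_n^*$ is replaced by a selection $b_n^*$ from $B(y_n)$. I expect a case split to be necessary. When $z \in C$, the monotonicity bound $\langle z - y_n, c_n^*\rangle \leq 0$ coming from $c_n^* \in N_C(y_n)$ is available, and one hopes to drive the divergence by refining the FPV extraction using local boundedness of $B$ near $x_0$ from Fact~\ref{pheps:11}. When $z \notin C$, Hahn--Banach separation of $z$ from $C$ provides a direction along which one can steer $y_n$ toward $\partial C$ so that $b_n^*$ becomes unbounded and appropriately aligned, producing the divergence through the $b_n^*$ term. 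Carrying out this balancing carefully in the nonreflexive setting is the delicate technical crux of the proof.
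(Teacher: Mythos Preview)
Your outline has a genuine gap at the point where you hope to drive $F_{A+B}(z,z^*)$ to $+\infty$.  You apply the (FPV) property of $A+N_C$ only at the \emph{fixed} pair $(z,z^*)$, obtaining $(y_n,y_n^*)\in\gra(A+N_C)$ with the strict inequality $\langle z-y_n,\,y_n^*-z^*\rangle>0$.  Feeding this into the Fitzpatrick bound yields, after your substitution,
\[
F_{A+B}(z,z^*) \;>\; \langle z,z^*\rangle \;+\; \langle z-y_n,\,b_n^*-c_n^*\rangle,
\]
and now you need the residual $\langle z-y_n,\,b_n^*-c_n^*\rangle$ to be unbounded above.  But nothing in your construction forces this: the (FPV) inequality at a fixed $(z,z^*)$ is merely a strict sign condition, not a quantitative growth estimate, and there is no mechanism to make the $b_n^*$ or $c_n^*$ blow up in a helpful direction.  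Your proposed case split ($z\in C$ versus $z\notin C$) does not supply one either---in the first case you only get $\langle z-y_n,c_n^*\rangle\le 0$, and in the second the separation of $z$ from $C$ gives no control over where the (FPV) witnesses $y_n$ land or how $b_n^*\in B y_n$ behaves.

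The paper's proof closes this gap by testing (FPV) not at $(z,z^*)$ but at the \emph{moving} points $(z,\,n y_0^*)$, where $y_0^*$ is a fixed unit functional with $\langle z,y_0^*\rangle\ge\tfrac{2}{3}\|z\|$.  This choice, together with narrow tubes $U_n=[0,z]+\tfrac{\alpha}{4n}\mathbb{B}_X$ around the segment from the interior point $0$ to $z$, forces the extracted $(z_n,z_n^*)$ to satisfy $\langle z-z_n,z_n^*\rangle>\tfrac{1}{4}n\alpha\to\infty$.  That built-in linear growth in $n$ is what produces $F_{A+B}(z,z^*)=\infty$; the subsequent case analysis (on the limit location $\beta z$ of $z_n$ and on boundedness of selections $v_n^*\in B z_n$, using Fact~\ref{pheps:11} and a weak$^*$ compactness argument) only serves to verify that the companion term $\langle z-z_n,v_n^*\rangle$ does not cancel this growth.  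The idea of letting the dual test point run to infinity is the missing ingredient in your proposal.
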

\begin{proof}
By \cite[Theorem~3.4]{Fitz88}, $\overline{\dom A\cap\dom B}=\overline{\dom (A+B)}
\subseteq \overline{P_X\left[\dom F_{A+B}\right]}$.
It suffices to show that
\begin{align}
P_X\left[\dom F_{A+B}\right]\subseteq\overline{\dom A\cap\dom B}.\label{FCG:10}
\end{align}
After translating the graphs if necessary, we can and do
assume that $0\in\dom A\cap\inte\dom B$ and that $(0,0)\in\gra B$.
\opt{com}{(We can do it in this way: Assume that $a\in\dom A\cap\inte\dom B$
 and $(a,b^*)\in\gra B$.
Let $A':X\rightrightarrows X^*$ and $B':X\rightrightarrows X^*$ be such that
\begin{align}
\gra A'=\gra A-\{(a,0)\},\quad \gra B'=\gra B-\{(a,b^*)\}.\label{Revt:1}\end{align}
Then we have $A', B'$ are maximal monotone with $0\in\dom A'\cap\inte\dom B'$ and $(0,0)\in\gra B'$.
Since  $(A'+N_{\overline{\dom B'}})(\cdot)=A'(\cdot)+N_{\overline{\dom B}-\{a\}}(\cdot)
=A(\cdot+a)+N_{\overline{\dom B}}(\cdot+a)=(A+N_{\overline{\dom B}})(\cdot+a)$, by the assumption,
$A'+N_{\overline{\dom B'}}$ is  maximal monotone of type (FPV).
By the assumption, we also have that
$\dom A'\cap\overline{\dom B'}=\left[\dom A-\{a\}\right]\cap\left[\overline{\dom B}-\{a\}\right]
\subseteq\left[{\dom B}-\{a\}\right]=\dom B'$.
By \eqref{Revt:1}, $(A'+B')(\cdot)=(A+B)(\cdot+a)-\{b^*\}$.
Clearly,
$A+B$ is maximal monotone if and only if $A'+B'$ is maximal monotone.)}

To show \eqref{FCG:10}, we take $z\in P_X\left[\dom F_{A+B}\right]$ and
we assume to the contrary that
\begin{align}z\notin\overline{\dom A\cap\dom B}\label{FPCGG:1}.\end{align}
Thus $\alpha= \di(z, \overline{\dom A\cap\dom B})>0$.
Now take $y^*_0\in X^*$ such that
\begin{align}
\|y^*_0\|=1\quad\text{and}\quad \langle z,y^*_0\rangle\geq\tfrac{2}{3}\|z\|.\label{FG:1}
\end{align}
Set
\begin{align}
U_n= \left[0,z\right]+\tfrac{\alpha}{4n}\mathbb{B}_X,\quad \forall n\in\NN\label{FDD:1}.
\end{align}
Since  $0\in N_{\overline{\dom B}}(x), \forall x\in\dom B$,
$\gra B\subseteq \gra (B+N_{\overline{\dom B}})$.
Since $B$ is maximal monotone and $B+N_{\overline{\dom B}}$ is a monotone
extension of $B$, we must have $B=B+N_{\overline{\dom B}}$.
Thus
\begin{align}
A+B=A+N_{\overline{\dom B}}+B.\label{FG:14}
\end{align}
Since $\dom A\cap\overline{\dom B}\subseteq\dom B$ by assumption,
we obtain
\begin{align*}
\dom A\cap\dom B\subseteq\dom (A+N_{\overline{\dom B}})=\dom A\cap\overline{\dom B}\subseteq \dom A\cap\dom B.
\end{align*}
Hence
\begin{align}
\dom A\cap\dom B=\dom (A+N_{\overline{\dom B}}).\label{FCGG:u1}
\end{align}
By \eqref{FPCGG:1} and \eqref{FCGG:u1}, $z\notin\dom (A+N_{\overline{\dom B}})$
and thus $(z,ny^*_0)\notin\gra (A+N_{\overline{\dom B}}), \forall n\in\NN$.
For every $n\in\NN$, since $z\in U_n$
and since $A+N_{\overline{\dom B}}$ is of type (FPV) by assumption,
we deduce the existence of
 $(z_n,z^*_n)\in\gra  (A+N_{\overline{\dom B}})$ such that $z_n\in U_n$ and
\begin{align}
\langle z-z_n, z^*_n\rangle>n\langle z-z_n,y^*_0\rangle,\quad \forall n\in\NN.\label{FG:7}
\end{align}
Hence, using \eqref{FDD:1},
there exists $\lambda_n\in\left[0,1\right]$ such that
\begin{align}
\|z-z_n-\lambda_n z\|=\|z_n-(1-\lambda_n) z\|<\tfrac{1}{4}\alpha,\quad \forall n\in\NN.\label{FG:2}
\end{align}
By the triangle inequality, we have
$\|z-z_n\|<\lambda_n\| z\|+\tfrac{1}{4}\alpha$ for every $n\in\NN$.
From the definition of $\alpha$ and \eqref{FCGG:u1},
it follows that $\alpha\leq\|z-z_n\|$ and hence that
$\alpha<\lambda_n\| z\|+\tfrac{1}{4}\alpha$.
Thus,
\begin{align}
\tfrac{3}{4}\alpha<\lambda_n\| z\|,\quad \forall n\in\NN.\label{FG:4}
\end{align}
By \eqref{FG:2} and \eqref{FG:1},
\begin{align}
\langle z-z_n-\lambda_n z,y^*_0\rangle\geq-\|z_n-(1-\lambda_n) z\|>-\tfrac{1}{4}\alpha,\quad \forall n\in\NN.\label{FG:a3}
\end{align}
By \eqref{FG:a3}, \eqref{FG:1} and \eqref{FG:4},
\begin{align}
\langle z-z_n,y^*_0\rangle>\lambda_n\langle z,y^*_0\rangle-\tfrac{1}{4}\alpha>
\tfrac{2}{3}\tfrac{3}{4}\alpha-\tfrac{1}{4}\alpha=\tfrac{1}{4}\alpha,\quad \forall n\in\NN.\label{FG:3}
\end{align}
Then, by \eqref{FG:7} and \eqref{FG:3},
\begin{align}
 \langle z-z_n, z^*_n\rangle>\tfrac{1}{4}n\alpha,\quad \forall n\in\NN.\label{FG:10}
\end{align}
By \eqref{FDD:1}, there exist $t_n\in\left[0,1\right]$
 and $b_n\in\tfrac{\alpha}{4n}\mathbb{B}_X$ such
that $z_n=t_nz+b_n$. Since $t_n\in\left[0,1\right]$, there exists  a
convergent subsequence of $(t_n)_{n\in\NN}$, which, for convenience, we
still denote by $(t_n)_{n\in\NN}$. Then
$t_n\rightarrow \beta$, where $\beta\in\left[0,1\right]$.
Since $b_n\rightarrow 0$, we have
\begin{align}z_n\rightarrow \beta z.\label{FCG:1}\end{align}
By \eqref{FCGG:u1},
$z_n\in\dom A\cap\dom B$; thus, $\|z_n-z\|\geq\alpha$ and
$\beta\in\left[0,1\right[$.
In view of \eqref{FG:14} and \eqref{FG:10}, we have,
for every $z^*\in X^*$,
\begin{align}
&F_{A+B}(z,z^*)=F_{A+N_{\overline{\dom B}}+B}(z,z^*)\nonumber\\
&\geq\sup_{\{n\in\NN, y^*\in X^*\}}\left[\langle z_n,z^*\rangle+\langle z-z_n,z^*_n\rangle
+\langle z-z_n, y^*\rangle -\iota_{\gra B}(z_n,y^*)\right]\nonumber\\
& \geq\sup_{\{n\in\NN, y^*\in X^*\}}\left[\langle z_n,z^*\rangle+\tfrac{1}{4}n\alpha
+\langle z-z_n, y^*\rangle -\iota_{\gra B}(z_n,y^*)\right].\label{FCG:02}
\end{align}
We now claim that
\begin{align}
F_{A+B}(z,z^*)=\infty.\label{FCGG:5}
\end{align}

We consider two cases.

\emph{Case 1}: $\beta=0$.

By \eqref{FCG:1} and Fact~\ref{pheps:11} (applied to $0\in\inte\dom B$),  there exist $N\in\NN$ and $K>0$ such that
\begin{align}
Bz_n\neq\varnothing\quad\text{and}\quad \sup_{y^*\in Bz_n}\|y^*\|\leq K, \quad\forall n\geq N.\label{Ret:1}
\end{align}

Then, by \eqref{FCG:02},
\begin{align*}
F_{A+B}(z,z^*)&\geq \sup_{\{n\geq N, y^*\in X^*\}}\left[\langle z_n,z^*\rangle+\tfrac{1}{4}n\alpha
+\langle z-z_n, y^*\rangle -\iota_{\gra B}(z_n,y^*) \right]\\
&\geq \sup_{\{n\geq N, y^*\in Bz_n\}}\left[-\| z_n\|\cdot\|z^*\|+\tfrac{1}{4}n\alpha
-\|z-z_n\|\cdot \|y^*\| \right]\\
&\geq \sup_{\{n\geq N\}}\left[-\| z_n\|\cdot\|z^*\|+\tfrac{1}{4}n\alpha
-K\|z-z_n\| \right]\quad\text{(by \eqref{Ret:1})}\\
&=\infty\quad\text{(by \eqref{FCG:1})}.
\end{align*}
Thus \eqref{FCGG:5} holds.

\emph{Case 2}: $\beta\neq0$.

Take $v^*_n\in Bz_n$.
We consider two subcases.

\emph{Subcase 2.1}:  $(v^*_n)_{n\in\NN}$ is bounded.

By \eqref{FCG:02},
\begin{align*}
F_{A+B}(z,z^*)&\geq \sup_{\{n\in\NN\}}\left[\langle z_n,z^*\rangle+\tfrac{1}{4}n\alpha
+\langle z-z_n, v_n^*\rangle \right]\\
&\geq \sup_{\{n\in\NN\}}\left[-\| z_n\|\cdot\|z^*\|+\tfrac{1}{4}n\alpha
-\|z-z_n\|\cdot \|v_n^*\| \right]\\
&=\infty\quad\text{(by \eqref{FCG:1} and the boundedness of $(v^*_n)_{n\in\NN}$)}.
\end{align*}
Hence \eqref{FCGG:5} holds.

\emph{Subcase 2.2}: $(v^*_n)_{n\in\NN}$ is unbounded.

We first show
\begin{align}
\limsup_{n\rightarrow\infty}\,\langle z-z_n, v_n^*\rangle \geq 0.
\label{FCG:2}
\end{align}
Since $(v^*_n)_{n\in\NN}$ is unbounded and
after passing to a subsequence if necessary, we assume that
$\|v^*_n\|\neq 0,\forall n\in\NN$ and that $\|v^*_n\|\rightarrow +\infty$.
By $0\in\inte\dom B$ and Fact~\ref{pheps:11},
there exist $\delta>0$ and $M>0$ such that
\begin{align}
By\neq\varnothing \quad\text{and}\quad\sup_{y^*\in By}\|y^*\|\leq M,\quad \forall y\in \delta
\mathbb{B}_X.\label{RVT:10a}
\end{align}
Then we have
\begin{align}
&\langle z_n-y, v^*_n-y^*\rangle\geq0,\quad \forall y\in \delta \mathbb{B}_X, y^*\in By, n\in\NN\nonumber\\
&\Rightarrow  \langle z_n, v^*_n\rangle-\langle y, v^*_n\rangle
+\langle z_n-y, -y^*\rangle\geq0,\quad\forall y\in \delta \mathbb{B}_X, y^*\in By,  n\in\NN\nonumber\\
&\Rightarrow   \langle z_n, v^*_n\rangle-
\langle y, v^*_n\rangle\geq\langle z_n-y, y^*\rangle,\quad\forall y\in
\delta \mathbb{B}_X, y^*\in By,  n\in\NN\nonumber\\
&\Rightarrow \langle z_n, v^*_n\rangle-
\langle y, v^*_n\rangle\geq -(\|z_n\|+\delta) M, \quad\forall y\in \delta
\mathbb{B}_X,  n\in\NN\quad\text{(by \eqref{RVT:10a})}\nonumber\\
&\Rightarrow \langle z_n, v^*_n\rangle
\geq \langle y, v^*_n\rangle -(\|z_n\|+\delta) M, \quad\forall y\in \delta
\mathbb{B}_X,  n\in\NN\nonumber\\
&\Rightarrow \langle z_n, v^*_n\rangle
\geq \delta\|v^*_n\|-(\|z_n\|+\delta) M, \quad \forall n\in\NN\nonumber\\
&\Rightarrow \langle z_n, \tfrac{v^*_n}{\|v^*_n\|}\rangle
\geq\delta-\tfrac{(\|z_n\|+\delta) M}{\|v^*_n\|}, \quad \forall n\in\NN.\label{FCG:3}
\end{align}
By the Banach-Alaoglu Theorem
(see \cite[Theorem~3.15]{Rudin}), there  exist a weak* convergent \emph{subnet}
$(v^*_\gamma)_{\gamma\in\Gamma}$ of $(v^*_n)_{n\in\NN}$, say
\begin{align}\tfrac{v^*_\gamma}{\|v^*_\gamma\|}\weakstarly  w^*\in X^*.\label{FCGG:9}\end{align}
Using \eqref{FCG:1} and
taking the limit in \eqref{FCG:3} along the subnet, we obtain
\begin{align}
 \langle \beta z, w^*\rangle
\geq  \delta.
\end{align}
Since $\beta> 0$, we have
\begin{align}
 \langle  z, w^*\rangle
\geq  \tfrac{\delta}{\beta}>0.\label{FCG:03}
\end{align}
Now we assume to the contrary that
\begin{align*}
\limsup_{n\rightarrow\infty}\langle z-z_n, v_n^*\rangle
< -\varepsilon,
\end{align*}
for some $\varepsilon>0$.

Then, for all $n$ sufficiently large,
\begin{align*}
\langle z-z_n, v^*_n\rangle< -\tfrac{\varepsilon}{2},
\end{align*}
and so
\begin{align}
\langle z-z_n, \tfrac{v^*_n}{\|v^*_n\|}\rangle< -\tfrac{\varepsilon}{2\|v^*_n\|}\label{Rev:a}.
\end{align}
Then by \eqref{FCG:1} and \eqref{FCGG:9}, taking  the limit in \eqref{Rev:a} along the subnet again, we see that
\begin{align*}
\langle z-\beta z, w^*\rangle\leq 0.
\end{align*}
Since $\beta<1$, we deduce $\langle z, w^*\rangle\leq 0$
which contradicts \eqref{FCG:03}.
Hence \eqref{FCG:2} holds.
By \eqref{FCG:02},
\begin{align*}
F_{A+B}(z,z^*)&\geq \sup_{\{n\in\NN\}}\left[\langle z_n,z^*\rangle+\tfrac{1}{4}n\alpha
+\langle z-z_n, v_n^*\rangle \right]\\
&\geq \sup_{\{n\in\NN\}}\left[-\| z_n\|\cdot\|z^*\|+\tfrac{1}{4}n\alpha
+\langle z-z_n, v_n^*\rangle \right]\\
&\geq \limsup_{n\rightarrow\infty}\left[-\| z_n\|\cdot\|z^*\|+\tfrac{1}{4}n\alpha
+\langle z-z_n, v_n^*\rangle \right]\\
&=\infty\quad\text{(by \eqref{FCG:1} and \eqref{FCG:2})}.
\end{align*}
Hence
\begin{align}
F_{A+B}(z,z^*)=\infty.
\end{align}
Therefore, we have verified  \eqref{FCGG:5} in all cases.
However,  \eqref{FCGG:5} contradicts our original choice
that $z\in P_X\left[\dom F_{A+B}\right]$.
Hence
$P_X\left[\dom F_{A+B}\right]\subseteq\overline{\dom A\cap\dom B}$ and thus \eqref{FCG:10} holds.
Thus $\overline{P_X\left[\dom F_{A+B}\right]}=\overline{\dom A\cap\dom B}$.
\end{proof}

\begin{corollary}\label{PGV:01}
Let $A:X\To X^*$ be maximal monotone of type  (FPV) with convex domain,
and $B: X\To X^*$ be maximal monotone
with $\dom A\cap\inte\dom B\neq\varnothing$.
Assume that $\dom A\cap\overline{\dom B}\subseteq\dom B$. Then
$\overline{P_X\left[\dom F_{A+B}\right]}=\overline{\dom A\cap\dom B}$.

\end{corollary}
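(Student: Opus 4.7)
My plan is to obtain this corollary as an immediate consequence of Proposition~\ref{PGV:1}, once I verify that the hypothesis ``$A+N_{\overline{\dom B}}$ is maximal monotone of type (FPV)'' is automatic here. The natural tool for that verification is Corollary~\ref{domain:L1}, applied with $C=\overline{\dom B}$.

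So first I would check the three hypotheses of Corollary~\ref{domain:L1} for the pair $(A,\overline{\dom B})$. The operator $A$ is maximal monotone of type (FPV) with convex domain by assumption, which handles the first hypothesis. Next, $\overline{\dom B}$ is nonempty and closed by construction; it is also convex, because $B$ is maximal monotone with $\inte\dom B\neq\varnothing$ (this interior is nonempty since $\dom A\cap\inte\dom B\neq\varnothing$), so Fact~\ref{f:referee02c} applies to $B$ and yields convexity of $\overline{\dom B}$. Finally, the elementary inclusion $\inte\dom B\subseteq\inte\overline{\dom B}$ combined with $\dom A\cap\inte\dom B\neq\varnothing$ gives $\dom A\cap\inte\overline{\dom B}\neq\varnothing$. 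Corollary~\ref{domain:L1} then delivers that $A+N_{\overline{\dom B}}$ is maximal monotone of type (FPV).

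With that in hand, every hypothesis of Proposition~\ref{PGV:1} is in place: $A,B$ are maximal monotone with $\dom A\cap\inte\dom B\neq\varnothing$, the operator $A+N_{\overline{\dom B}}$ is maximal monotone of type (FPV), and $\dom A\cap\overline{\dom B}\subseteq\dom B$. Invoking Proposition~\ref{PGV:1} yields $\overline{P_X[\dom F_{A+B}]}=\overline{\dom A\cap\dom B}$, which is the desired conclusion. There is no genuine obstacle here, as the argument is purely a matter of checking hypotheses; the only mild subtlety worth flagging is the convexity of $\overline{\dom B}$, which is not part of the assumptions but is supplied for free by Fact~\ref{f:referee02c}.
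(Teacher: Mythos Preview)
Your proposal is correct and follows exactly the same route as the paper: combine Fact~\ref{f:referee02c} (to get convexity of $\overline{\dom B}$), Corollary~\ref{domain:L1} (to conclude that $A+N_{\overline{\dom B}}$ is of type (FPV)), and Proposition~\ref{PGV:1}. You have simply spelled out the hypothesis-checking that the paper leaves implicit in its one-line ``Combine \ldots'' proof.
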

\begin{proof}
Combine Fact~\ref{f:referee02c}, Corollary~\ref{domain:L1} and Proposition~\ref{PGV:1}.
\end{proof}

\begin{corollary}\label{PGV:02}
Let $A:X\To X^*$ be a maximal monotone linear relation,
and let $B: X\To X^*$ be maximal monotone
with $\dom A\cap\inte\dom B\neq\varnothing$.
Assume that $\dom A\cap\overline{\dom B}\subseteq\dom B$.
Then $\overline{P_X\left[\dom F_{A+B}\right]}=\overline{\dom A\cap\dom B}$.

\end{corollary}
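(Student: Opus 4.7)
The plan is to reduce this corollary directly to Corollary~\ref{PGV:01}, which already packages the work we need. The only gap to fill is verifying that a maximal monotone linear relation automatically satisfies the two extra hypotheses of Corollary~\ref{PGV:01}, namely that it is of type (FPV) and that its domain is convex.

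First, by Fact~\ref{f:referee01}, every maximal monotone linear relation is of type (FPV). Second, since $\gra A$ is a linear subspace of $X\times X^*$ by definition of a linear relation, its image $\dom A = P_X(\gra A)$ under the continuous linear map $P_X$ is itself a linear subspace of $X$, and in particular convex. Thus $A$ satisfies the blanket hypotheses of Corollary~\ref{PGV:01}.

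The remaining hypotheses of Corollary~\ref{PGV:01} --- namely $\dom A\cap\inte\dom B\neq\varnothing$ and $\dom A\cap\overline{\dom B}\subseteq\dom B$ --- are transferred verbatim from the hypotheses we are assuming here. Invoking Corollary~\ref{PGV:01} with this $A$ and this $B$ therefore yields the desired equality $\overline{P_X\left[\dom F_{A+B}\right]}=\overline{\dom A\cap\dom B}$.

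There is no real obstacle: the whole content of this corollary is the observation that ``maximal monotone linear relation'' is a special case of ``maximal monotone of type (FPV) with convex domain,'' so the statement is just a specialization of the preceding result. The proof is essentially a one-line citation combining Fact~\ref{f:referee01} with Corollary~\ref{PGV:01}.
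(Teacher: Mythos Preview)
Your proposal is correct and matches the paper's own alternative proof verbatim: the paper explicitly notes ``Alternatively, combine Fact~\ref{f:referee01} and Corollary~\ref{PGV:01}.'' Your additional observation that $\dom A = P_X(\gra A)$ is a linear subspace (hence convex) is the right justification for applying Corollary~\ref{PGV:01}.
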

\begin{proof}
Combine
Fact~\ref{f:referee02c}, Corollary~\ref{domain:L2} and
Proposition~\ref{PGV:1}.
Alternatively, combine Fact~\ref{f:referee01} and Corollary~\ref{PGV:01}.
\end{proof}

We are now ready for our main result.

\begin{theorem}[Main Result] \label{t:main}
Let $A, B:X\To X^*$ be  maximal monotone
with $\dom A\cap\inte\dom B\neq\varnothing$. Assume that $A+N_{\overline{\dom B}}$
 is maximal monotone of type (FPV), and that
$\dom A\cap\overline{\dom B}\subseteq\dom B$.
Then $A+B$ is maximal monotone.
\end{theorem}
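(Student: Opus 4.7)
The natural plan is to invoke Fact~\ref{f:referee1}, which reduces maximal monotonicity of $A+B$ to two conditions: (i) the set $\bigcup_{\lambda>0}\lambda\bigl[P_X(\dom F_A)-P_X(\dom F_B)\bigr]$ is a closed subspace of $X$, and (ii) the Fitzpatrick inequality $F_{A+B}(z,z^*)\geq\langle z,z^*\rangle$ holds on $X\times X^*$.

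Condition (i) I would dispatch immediately. The constraint qualification furnishes $a_0\in\dom A\cap\inte\dom B$ and a $\delta>0$ with $a_0+\delta\mathbb{B}_X\subseteq\dom B$. Since $\dom A\subseteq P_X(\dom F_A)$ and $\dom B\subseteq P_X(\dom F_B)$, the difference $P_X(\dom F_A)-P_X(\dom F_B)$ contains $\delta\mathbb{B}_X$, so the union over $\lambda>0$ is all of $X$, trivially a closed subspace.

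For (ii) I would fix $(z,z^*)\in X\times X^*$. The case $F_{A+B}(z,z^*)=\infty$ is trivial; otherwise $z\in P_X(\dom F_{A+B})$, and Proposition~\ref{PGV:1} places $z\in\overline{\dom A\cap\dom B}$. When $z\in\dom A\cap\dom B$ the inequality is direct: substituting $a:=z$ with any $a^*\in Az$, $b^*\in Bz$ in the supremum defining $F_{A+B}(z,z^*)$ already yields $F_{A+B}(z,z^*)\geq\langle z,z^*\rangle$. In the remaining case $z\in\overline{\dom A\cap\dom B}\setminus(\dom A\cap\dom B)$, my plan is to argue by contradiction: a gap $F_{A+B}(z,z^*)\leq\langle z,z^*\rangle-\varepsilon$ for some $\varepsilon>0$ translates into the strict monotonic separation $\langle z-a,z^*-c^*\rangle\geq\varepsilon$ for every $(a,c^*)\in\gra(A+B)$. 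Picking $z_n\to z$ with $z_n\in\dom A\cap\dom B$, and selections $a_n^*\in Az_n$, $b_n^*\in Bz_n$, this forces $\|a_n^*+b_n^*\|\to\infty$; one then mimics the Banach--Alaoglu/subnet analysis of Subcase~2.2 in the proof of Proposition~\ref{PGV:1}: pass to a weak-star cluster point $w^*$ of $(a_n^*+b_n^*)/\|a_n^*+b_n^*\|$, use local boundedness of $B$ at $a_0\in\inte\dom B$ (Fact~\ref{pheps:11}) together with monotonicity of $A+B$ to extract $\langle z-a_0,w^*\rangle>0$, and play this off against the inequality obtained from the $\varepsilon$-gap after normalization.

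The main obstacle is precisely this boundary case. Everything leading up to it is routine; but converting the strict monotonic separation into a contradiction requires a delicate interplay between weak-star compactness, local boundedness of $B$ at an interior point of its domain, and the constraint qualification. These are exactly the ingredients that made Subcase~2.2 of Proposition~\ref{PGV:1} succeed, and the expectation is that the same machinery transfers to deliver (ii), after which Fact~\ref{f:referee1} closes the argument.
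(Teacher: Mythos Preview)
Your reduction via Fact~\ref{f:referee1} and the verification of condition~(i) are correct, and the case $z\in\dom A\cap\dom B$ is indeed immediate. The gap is in the boundary case. The machinery of Subcase~2.2 in Proposition~\ref{PGV:1} does \emph{not} transfer as you suggest: there the contradiction hinges on $z_n\to\beta z$ with $\beta<1$ (because $z\notin\overline{\dom A\cap\dom B}$), which is what converts $\langle(1-\beta)z,w^*\rangle\le 0$ into $\langle z,w^*\rangle\le 0$. In your situation $z_n\to z$ by construction, so $\beta=1$ and that step degenerates to $0\le 0$. Your proposed substitute---monotonicity of $A+B$ plus local boundedness of $B$ at $a_0$---does not yield $\langle z-a_0,w^*\rangle>0$ either, because $A+B$ is not known on a ball around $a_0$ (only $B$ is), and the normalized $\varepsilon$-gap inequality $\langle z_n-z,c_n^*/\|c_n^*\|\rangle\ge\varepsilon/(2\|c_n^*\|)$ collapses to $0\ge 0$ in the limit. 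Most tellingly, your boundary argument never invokes the (FPV) hypothesis on $A+N_{\overline{\dom B}}$ beyond its use inside Proposition~\ref{PGV:1}.

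The paper's proof supplies the missing idea: after translating so that $0\in\dom A\cap\inte\dom B$ and $(0,0)\in\gra A\cap\gra B$, it does not work at $z$ directly but establishes $F_{A+B}(\lambda z,\lambda z^*)\ge\lambda^2\langle z,z^*\rangle$ for every $\lambda\in\,]0,1[$. The key is that $\lambda z\in\inte\dom B$ (by Fact~\ref{f:referee02c} and convexity), so $B$ is locally bounded there. If $\lambda z\notin\dom A$, the (FPV) property of $A+N_{\overline{\dom B}}$ is applied a \emph{second} time, on balls $U_n$ shrinking to $\lambda z$, to produce $(b_n,b_n^*)\in\gra(A+N_{\overline{\dom B}})$ with $b_n\to\lambda z$ and $\langle\lambda z,b_n^*\rangle+\langle b_n,\lambda z^*\rangle-\langle b_n,b_n^*\rangle>\lambda^2\langle z,z^*\rangle$; since eventually $b_n\in\inte\dom B$, the normal cone vanishes and $(b_n,b_n^*)\in\gra A$, while local boundedness of $B$ at $\lambda z$ controls the $B$-contribution. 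Finally, $t\mapsto F_{A+B}(tz,tz^*)$ is finite at $t=0$ and $t=1$, hence continuous on $[0,1]$, and letting $\lambda\to 1^-$ delivers $F_{A+B}(z,z^*)\ge\langle z,z^*\rangle$.
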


\begin{proof}
After translating the graphs if necessary, we can and do assume that
$0\in\dom A\cap\inte\dom B$ and that $(0,0)\in\gra A\cap\gra B$.
By Fact~\ref{f:Fitz}, $\dom A\subseteq P_X(\dom F_A)$ and $\dom B\subseteq P_X(\dom F_{B})$.
Hence,
\begin{align}\bigcup_{\lambda>0} \lambda
\big(P_X(\dom F_A)-P_X(\dom F_{B})\big)=X.\end{align}
Thus, by Fact~\ref{f:referee1}, it suffices to show that
\begin{equation} \label{e0:ourgoal}
F_{A+ B}(z,z^*)\geq \langle z,z^*\rangle,\quad \forall(z,z^*)\in X\times X^*.
\end{equation}
Take $(z,z^*)\in X\times X^*$.
Then
\begin{align}
&F_{A+B}(z,z^*)\nonumber\\
&=\sup_{\{x,x^*,y^*\}}\left[\langle x,z^*\rangle+\langle z,x^*\rangle-\langle x,x^*\rangle
+\langle z-x, y^*\rangle -\iota_{\gra A}(x,x^*)-\iota_{\gra
B}(x,y^*)\right].\label{see:1}
\end{align}
Assume to the contrary  that
\begin{align}
F_{A+B}(z,z^*)<\langle z,z^*\rangle.\label{See:1a4}
\end{align}
Then $(z,z^*)\in\dom F_{A+B}$ and,
by Proposition~\ref{PGV:1},
 \begin{align}z\in\overline{\dom A\cap\dom B}
=\overline{P_X\left[\dom F_{A+B}\right]}.\label{SeF:1}
\end{align}
Next, we show that
\begin{align}
F_{A+B}(\lambda z,\lambda z^*)\geq \lambda^2\langle z,z^*\rangle,
\quad\forall \lambda\in\left]0,1\right[.\label{See:10}\end{align}
Let $\lambda\in\left]0,1\right[$.
By \eqref{SeF:1} and Fact~\ref{f:referee02c}, $z\in \overline{P_X\dom F_B}$. By Fact~\ref{f:referee02c} again and $0\in\inte\dom B$,
$0\in\inte\overline{P_X\dom F_B}$.
Then, by \cite[Theorem~1.1.2(ii)]{Zalinescu}, we have
\begin{align}
\lambda z\in\inte\overline{P_X\dom F_B}=\inte\left[P_X\dom
F_B\right]\label{ReAu:1}.
\end{align}
Combining \eqref{ReAu:1} and Fact~\ref{f:referee02c},  we see that
$\lambda z\in\inte\dom B$.

We consider two cases.

\emph{Case 1}: $\lambda z\in\dom A$.\\
By \eqref{see:1},
\begin{align*}
&F_{A+B}(\lambda z,\lambda z^*)\nonumber\\
&\geq\sup_{\{x^*,y^*\}}\left[\langle \lambda z,\lambda z^*\rangle+\langle \lambda z,x^*\rangle-\langle \lambda z,x^*\rangle
+\langle \lambda z-\lambda z, y^*\rangle -\iota_{\gra A}(\lambda z,x^*)-\iota_{\gra B}(\lambda z,y^*)\right]\\
&=\langle \lambda z,\lambda z^*\rangle.
\end{align*}
Hence \eqref{See:10} holds.

\emph{Case 2}: $\lambda z\notin\dom A$.\\
Using $0\in\dom A\cap\dom B$ and
the convexity of $\overline{\dom A\cap \dom B}$ (which follows from
\eqref{SeF:1}), we obtain
$\lambda z\in\overline{\dom A\cap \dom B}\subseteq \overline{\dom A\cap \overline{\dom B}}$.
Set
 \begin{align}
U_n= \lambda z+\tfrac{1}{n}\mathbb{B}_X,\quad \forall n\in\NN\label{FD:1}.
\end{align}
Then $U_n\cap \dom (A+N_{\overline{\dom B}})\neq\varnothing$.
Since $(\lambda z, \lambda z^*)\notin\gra (A+N_{\overline{\dom B}})$,
$\lambda z\in U_n$, and $A+N_{\overline{\dom B}}$ is of type (FPV),
 there exists $(b_n,b^*_n)\in\gra (A+N_{\overline{\dom B}})$
such that $b_n\in U_n$  and
\begin{align}
\langle \lambda z,b^*_n\rangle+\langle b_n,\lambda z^*\rangle-
\langle b_n,b^*_n\rangle>\lambda^2\langle z,z^*\rangle,\quad \forall n\in\NN.\label{see:20}
\end{align}
Since $\lambda z\in\inte\dom B$ and $b_n\rightarrow \lambda z$, by Fact~\ref{pheps:11},
 there exist $N\in\NN$ and  $M>0$ such that
\begin{align}b_n\in\inte\dom B \quad\text{and}\quad \sup_{v^*\in Bb_n}\| v^*\|
\leq M,\quad \forall n\geq N.\label{See:1a2}
\end{align}
Hence
 $N_{\overline{\dom B}}(b_n)=\{0\}$ and thus $(b_n,b^*_n)\in\gra A$ for every $n\geq N$.
Thus by \eqref{see:1}, \eqref{see:20} and \eqref{See:1a2},
\begin{align}
&F_{A+B}(\lambda z,\lambda z^*)\nonumber\\
&\geq\sup_{\{ v^*\in Bb_n\}}\left[\langle b_n,\lambda z^*\rangle+\langle \lambda z,b_n^*\rangle-\langle b_n,b_n^*\rangle
+\langle \lambda z-b_n, v^*\rangle \right],\quad \forall n\geq N\nonumber\\
&\geq\sup_{\{ v^*\in Bb_n\}}\left[\lambda^2\langle z,z^*\rangle
+\langle \lambda z-b_n,v^*\rangle \right],\quad \forall n\geq N\quad\text{(by \eqref{see:20})}\nonumber\\
&\geq\sup\left[\lambda^2\langle z,z^*\rangle
-M\|\lambda z-b_n\| \right],\quad \forall n\geq N\quad\text{(by \eqref{See:1a2})}\nonumber\\
&\geq \lambda^2\langle z,z^*\rangle\quad\text{(by $b_n\rightarrow \lambda z$)}.\label{See:1a3}
\end{align}
Hence $
F_{A+B}(\lambda z,\lambda z^*)\geq \lambda^2\langle z,z^*\rangle$.

We have verified that \eqref{See:10} holds in both cases.
Since $(0,0)\in\gra A\cap\gra B$, we obtain
$(\forall (x,x^*)\in\gra (A+B))$ $\langle x,x^*\rangle\geq0$.
Thus, $F_{A+B}(0,0)=0$.
Now define
\begin{align*}
f\colon \left[0,1\right]\rightarrow\RR \colon  t\rightarrow F_{A+B}(tz,tz^*).
\end{align*}
Then $f$ is continuous on $\left[0,1\right]$ by \cite[Proposition~2.1.6]{Zalinescu}.
From \eqref{See:10}, we obtain
\begin{align}
F_{A+B}(z,z^*)=\lim_{\lambda\rightarrow1^{-}}
F_{A+B}(\lambda z,\lambda z^*)\geq \lim_{\lambda\rightarrow1^{-}}
\langle \lambda z,\lambda z^*\rangle=\langle z,z^*\rangle,
\end{align}
which contradicts \eqref{See:1a4}. Hence \begin{align}
F_{A+B}(z,z^*)\geq \langle z,z^*\rangle.
\end{align}
Therefore,
 \eqref{e0:ourgoal} holds, and $A+B$ is maximal monotone.
\end{proof}

Theorem~\ref{t:main} allows us to deduce both new and previously known sum
theorems.

\begin{corollary}\label{Veron:1}
Let $f:X\rightarrow\RX$ be proper, lower semicontinuous, convex,
and let $ B:X\To X^*$ be  maximal monotone
with $\dom f\cap\inte\dom B\neq\varnothing$.
Assume that $\dom \partial f\cap\overline{\dom B}\subseteq\dom B$.
Then $\partial f+B$ is maximal monotone.
\end{corollary}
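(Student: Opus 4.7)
The plan is to apply Theorem~\ref{t:main} with $A := \partial f$. Of the four hypotheses of that theorem, the maximal monotonicity of $\partial f$ is the classical Rockafellar theorem on subdifferentials, and the inclusion $\dom\partial f\cap\overline{\dom B}\subseteq\dom B$ is precisely what has been assumed. So only two items remain to be checked: (a) that $\partial f+N_{\overline{\dom B}}$ is maximal monotone of type~(FPV), and (b) that $\dom\partial f\cap\inte\dom B\neq\varnothing$.

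For (a), the strategy is to recognize $\partial f+N_{\overline{\dom B}}$ as a single subdifferential. Since $\dom f\cap\inte\dom B\neq\varnothing$ and $\inte\dom B\subseteq\inte\overline{\dom B}$, there is a point $x_0\in\dom f\cap\inte\overline{\dom B}$, at which the indicator $\iota_{\overline{\dom B}}$ is continuous (in fact identically zero). Fact~\ref{f:F4}, applied to $f$ and $\iota_{\overline{\dom B}}$, then yields the sum rule
\[
\partial\bigl(f+\iota_{\overline{\dom B}}\bigr)\;=\;\partial f+N_{\overline{\dom B}}.
\]
The function $f+\iota_{\overline{\dom B}}$ is proper (its domain contains $x_0$), lsc, and convex, so Fact~\ref{f:referee0d} delivers that $\partial(f+\iota_{\overline{\dom B}})$ is maximal monotone of type~(FPV), which establishes (a).

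The step I expect to be the main subtlety is (b), because the given interior-intersection hypothesis involves $\dom f$ rather than the a~priori smaller set $\dom\partial f$. To bridge this gap I plan to invoke the Br\o{}ndsted--Rockafellar theorem, which guarantees that $\dom\partial f$ is norm-dense in $\dom f$. Fixing $x_0\in\dom f\cap\inte\dom B$ and approximating it by some $x_1\in\dom\partial f$ sufficiently close, the openness of $\inte\dom B$ keeps $x_1$ inside it, which supplies the required witness $x_1\in\dom\partial f\cap\inte\dom B$. With all hypotheses of Theorem~\ref{t:main} verified, that theorem immediately yields the maximal monotonicity of $\partial f+B$.
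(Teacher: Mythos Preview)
Your proposal is correct and follows essentially the same route as the paper: identify $\partial f + N_{\overline{\dom B}}$ with $\partial(f + \iota_{\overline{\dom B}})$ via the subdifferential sum rule (Fact~\ref{f:F4}), invoke Fact~\ref{f:referee0d} for the (FPV) property, and apply Theorem~\ref{t:main}. One small remark: you tacitly use that $\overline{\dom B}$ is convex (needed for $\iota_{\overline{\dom B}}$ to be convex and for Fact~\ref{f:F4} to apply), which the paper secures via Fact~\ref{f:referee02c}; conversely, your Br\o{}ndsted--Rockafellar argument for (b) is a genuine clarification of a point the paper's proof passes over in silence.
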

\begin{proof}
By Fact~\ref{f:referee02c} and Fact~\ref{f:F4}, $\partial f+N_{\overline{\dom B}}=\partial (f+\iota_{\overline{\dom B}})$.
Then by Fact~\ref{f:referee0d}, $\partial f+N_{\overline{\dom B}}$ is type of (FPV).
Now apply Theorem~\ref{t:main}.
\end{proof}

\begin{corollary}\label{Coro:001}
Let $A:X\To X^*$ be maximal monotone of type  (FPV),
and let $B:X\To X^*$ be  maximal monotone  with full domain.
Then $A+B$ is maximal monotone.
\end{corollary}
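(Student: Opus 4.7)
The plan is to deduce this corollary as a direct specialization of the main result, Theorem~\ref{t:main}. The key observation is that when $B$ has full domain, i.e., $\dom B = X$, all three hypotheses of Theorem~\ref{t:main} simplify dramatically and are automatically verified.

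First I would note that $\dom B = X$ implies $\inte\dom B = X$ and $\overline{\dom B} = X$. Therefore the condition $\dom A \cap \inte \dom B \neq \varnothing$ reduces to $\dom A \neq \varnothing$, which holds since $A$ is maximal monotone. Similarly, $\dom A \cap \overline{\dom B} = \dom A \subseteq X = \dom B$, so the inclusion hypothesis is trivially met.

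Next, I would handle the type (FPV) hypothesis for $A + N_{\overline{\dom B}}$. Since $\overline{\dom B} = X$, the normal cone operator $N_X$ is identically $\{0\}$, hence $A + N_{\overline{\dom B}} = A$. By assumption $A$ is maximal monotone of type (FPV), so this condition is immediate.

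With all three hypotheses verified, I would invoke Theorem~\ref{t:main} directly to conclude that $A+B$ is maximal monotone. The only ``obstacle'' is being careful about the triviality of $N_X = \{0\}$ and confirming that maximal monotonicity of $A$ plus the full-domain property of $B$ together suffice; no deeper analysis is needed beyond the main theorem.
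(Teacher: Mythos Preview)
Your proposal is correct and follows essentially the same approach as the paper: observe that $\overline{\dom B}=X$ forces $N_{\overline{\dom B}}=N_X\equiv\{0\}$, so $A+N_{\overline{\dom B}}=A$ is of type (FPV) by hypothesis, and the remaining conditions of Theorem~\ref{t:main} are trivially satisfied. The paper's proof is simply a terser one-line version of what you wrote.
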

\begin{proof} Since $A+N_{\overline{\dom B}}=A+N_X=A$ and thus $A+N_{\overline{\dom B}}$ is maximal
monotone of type (FPV),
the conclusion follows from  Theorem~\ref{t:main}.
\end{proof}

\begin{corollary}[Verona-Verona]
\emph{(See \cite[Corollary~2.9(a)]{VV} or \cite[Theorem~53.1]{Si2}.)}
\label{Veron:2}
Let $f:X\rightarrow\RX$ be proper, lower semicontinuous, and  convex,
and let $ B:X\To X^*$ be  maximal monotone with full domain.
Then $\partial f+B$ is maximal monotone.
\end{corollary}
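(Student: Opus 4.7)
The plan is essentially a direct citation of results already proved in this section, and the main work is simply verifying that the very mild hypotheses of those results collapse to nothing when $\dom B = X$. I would present two equivalent routes and let the reader take either.

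The cleanest route goes through Corollary~\ref{Coro:001}. By Fact~\ref{f:referee0d}, the subdifferential $\partial f$ of a proper lower semicontinuous convex function is maximal monotone of type (FPV). Since $B$ is assumed maximal monotone with full domain, Corollary~\ref{Coro:001} applies verbatim with $A=\partial f$ and yields that $\partial f + B$ is maximal monotone. This is the whole argument.

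Alternatively, one can derive the statement from Corollary~\ref{Veron:1}. Because $\dom B = X$, we have $\inte\dom B = X$ as well, and $\overline{\dom B}=X$, so both constraint conditions of Corollary~\ref{Veron:1} are trivial: $\dom f \cap \inte\dom B = \dom f \neq \varnothing$ (since $f$ is proper), and $\dom\partial f \cap \overline{\dom B}= \dom\partial f \subseteq X = \dom B$ holds automatically. Thus Corollary~\ref{Veron:1} immediately delivers the conclusion.

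There is no real obstacle here; both routes are one-liners once the preceding results are in hand. The only point worth mentioning is that, in the Corollary~\ref{Veron:1} route, it is $\dom f$ (not $\dom\partial f$) that appears in the qualification condition, so no appeal to a Brøndsted--Rockafellar-type nonemptiness result for $\dom\partial f$ is needed: properness of $f$ is enough.
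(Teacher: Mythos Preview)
Your proposal is correct and matches the paper's own proof essentially verbatim: the paper also offers exactly these two routes, deriving the result either ``clear from Corollary~\ref{Veron:1}'' or by combining Fact~\ref{f:referee0d} with Corollary~\ref{Coro:001}. The only cosmetic difference is that the paper lists the Corollary~\ref{Veron:1} route first and your preferred route second.
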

\begin{proof}
Clear from Corollary~\ref{Veron:1}.
Alternatively, combine Fact~\ref{f:referee0d} and Corollary~\ref{Coro:001}.
\end{proof}

\begin{corollary}[Heisler]
\label{Corh:001}
\emph{(See \cite[Remark, page~17]{ph2}.)}
Let $A,B:X\To X^*$ be  maximal monotone with full domain.
Then $A+B$ is maximal monotone.
\end{corollary}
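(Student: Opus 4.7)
The plan is to reduce the statement to Corollary~\ref{Coro:001} by verifying that, under the full-domain hypothesis, one of the two operators is automatically of type (FPV). Since Corollary~\ref{Coro:001} asks only that $A$ be of type (FPV) and $B$ have full domain, it suffices to prove the following auxiliary claim: every maximal monotone operator with full domain is of type (FPV).

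To establish this claim, I would pick $A:X\To X^*$ maximal monotone with $\dom A=X$ and appeal to the criterion in Fact~\ref{f:referee02a}. Let $C$ be an arbitrary nonempty closed convex subset of $X$ with $\dom A\cap\inte C\neq\varnothing$ (which, since $\dom A=X$, simply means $\inte C\neq\varnothing$). I need to check that $A+N_C$ is maximal monotone. The key observation is that $N_C=\partial\iota_C$ where $\iota_C$ is proper, lower semicontinuous and convex, and $\dom\iota_C=C$ satisfies $\dom\iota_C\cap\inte\dom A=C\neq\varnothing$. Then the already-proven Verona--Verona result (Corollary~\ref{Veron:2}), applied with $f=\iota_C$ and the full-domain operator $A$ playing the role of $B$, yields that $\partial\iota_C+A=N_C+A$ is maximal monotone. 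By Fact~\ref{f:referee02a} this forces $A$ to be of type (FPV).

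Once the claim is in hand, the conclusion of Corollary~\ref{Corh:001} is immediate: take $A, B$ both maximal monotone with full domain. The claim says $A$ is of type (FPV), while $B$ still has full domain, so Corollary~\ref{Coro:001} applies and gives that $A+B$ is maximal monotone. Equivalently, one can invoke Theorem~\ref{t:main} directly, since $\overline{\dom B}=X$ makes $N_{\overline{\dom B}}=\{0\}$ and hence $A+N_{\overline{\dom B}}=A$, which is of type (FPV); the other two hypotheses $\dom A\cap\inte\dom B=X\neq\varnothing$ and $\dom A\cap\overline{\dom B}=X\subseteq\dom B$ are trivially satisfied.

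The only mildly subtle point is the apparent circularity: Corollary~\ref{Veron:2} is itself a descendant of Theorem~\ref{t:main}. However, there is no real circularity, because the proof of Corollary~\ref{Veron:2} uses Theorem~\ref{t:main} only via the fact (from Fact~\ref{f:referee0d}) that subdifferentials are of type (FPV), \emph{not} via any sum theorem for full-domain operators. So invoking Corollary~\ref{Veron:2} inside the proof of Corollary~\ref{Corh:001} is perfectly legitimate, and this is the expected route sketched by the placement of these results in the paper.
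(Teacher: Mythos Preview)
Your proposal is correct and follows essentially the same route as the paper: use Corollary~\ref{Veron:2} with $f=\iota_C$ to show that $N_C+A$ is maximal monotone for every relevant closed convex $C$, invoke Fact~\ref{f:referee02a} to conclude that the full-domain operator $A$ is of type (FPV), and then apply Corollary~\ref{Coro:001}. Your added remarks about the direct appeal to Theorem~\ref{t:main} and the absence of circularity are accurate but supplementary; the core argument matches the paper's proof.
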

\begin{proof} Let $C$ be a nonempty closed convex subset of $X$.
 By Corollary~\ref{Veron:2}, $N_C+A$ is maximal monotone.
Thus, $A$ is of type (FPV)  by Fact~\ref{f:referee02a}.
The conclusion now follows from Corollary~\ref{Coro:001}.
\end{proof}

\begin{corollary}
\label{Coro:00a1}
Let $A:X\To X^*$ be maximal monotone of type  (FPV)  with convex domain,
and let $B:X\To X^*$ be maximal monotone
with $\dom A\cap\inte\dom B\neq\varnothing$.
Assume that $\dom A\cap\overline{\dom B}\subseteq\dom B$.
Then $A+B$ is maximal monotone.
\end{corollary}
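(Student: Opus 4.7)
The plan is to reduce this directly to Theorem~\ref{t:main}. The hypotheses $\dom A \cap \inte \dom B \neq \varnothing$ and $\dom A \cap \overline{\dom B} \subseteq \dom B$ are inherited verbatim from the corollary's assumptions, so the only remaining task is to verify that $A + N_{\overline{\dom B}}$ is maximal monotone of type (FPV).

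To secure this, I will invoke Corollary~\ref{domain:L1} with $C := \overline{\dom B}$. The set $C$ is nonempty and closed by construction. For convexity of $\overline{\dom B}$, I appeal to Fact~\ref{f:referee02c} applied to $B$, whose hypothesis $\inte \dom B \neq \varnothing$ is guaranteed by the assumption $\dom A \cap \inte \dom B \neq \varnothing$. The interiority qualification $\dom A \cap \inte C \neq \varnothing$ then follows from the elementary inclusion $\inte \overline{\dom B} \supseteq \inte \dom B$, which yields $\dom A \cap \inte \overline{\dom B} \supseteq \dom A \cap \inte \dom B \neq \varnothing$. Corollary~\ref{domain:L1} therefore delivers that $A + N_{\overline{\dom B}}$ is of type (FPV); since the paper's definition of type (FPV) already presupposes maximal monotonicity, this supplies precisely the third hypothesis required by Theorem~\ref{t:main}, and the conclusion that $A+B$ is maximal monotone follows at once.

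There is no substantive obstacle in this proof: all the analytic content lives in Proposition~\ref{PGV:1} (via the Fitzpatrick-function argument tracking $\overline{P_X[\dom F_{A+B}]}$) and in Theorem~\ref{t:main} itself, together with Fact~\ref{ft:maina} which powers Corollary~\ref{domain:L1}. The present corollary is essentially a packaging result whose sole role is to present $A + N_{\overline{\dom B}}$ as an instance of the $A + N_C$ construction, so that the main theorem becomes directly applicable. The only minor point to keep in mind is to remember to extract the convexity of $\overline{\dom B}$ from Fact~\ref{f:referee02c} rather than from the convexity of $\dom A$, since the latter does not a priori imply the former.
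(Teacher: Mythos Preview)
Your proposal is correct and follows precisely the paper's own route: the paper's proof is the one-line ``Combine Fact~\ref{f:referee02c}, Corollary~\ref{domain:L1} and Theorem~\ref{t:main}'', and you have simply unpacked that combination, explaining how Fact~\ref{f:referee02c} supplies the convexity of $\overline{\dom B}$ needed to apply Corollary~\ref{domain:L1} with $C=\overline{\dom B}$, and how the resulting (FPV) property of $A+N_{\overline{\dom B}}$ then feeds into Theorem~\ref{t:main}.
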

\begin{proof}
Combine Fact~\ref{f:referee02c}, Corollary~\ref{domain:L1} and Theorem~\ref{t:main}.
\end{proof}

\begin{corollary}[Voisei]
\emph{(See \cite{Voisei09}.)}
\label{Coro:01}
Let $A:X\To X^*$ be maximal monotone of type  (FPV)  with convex domain,
let $C$ be a nonempty closed convex subset of $X$,
and suppose that $\dom A \cap \inte C\neq \varnothing$.
Then $A+N_C$ is maximal monotone.
\end{corollary}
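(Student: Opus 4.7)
The plan is to deduce this corollary by setting $B := N_C$ in Theorem~\ref{t:main} and verifying the three hypotheses. First I would note that $N_C$ is maximal monotone with $\dom N_C = C$, so in particular $\dom N_C$ is closed and $\overline{\dom N_C} = C = \dom N_C$. The Rockafellar-type condition $\dom A \cap \inte \dom N_C = \dom A \cap \inte C \neq \varnothing$ is exactly the standing assumption of the corollary, and the inclusion $\dom A \cap \overline{\dom N_C} \subseteq \dom N_C$ holds trivially because $\overline{\dom N_C} = \dom N_C = C$.

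The only nontrivial hypothesis of Theorem~\ref{t:main} to check is that $A + N_{\overline{\dom N_C}} = A + N_C$ is maximal monotone of type (FPV). This is precisely what Corollary~\ref{domain:L1} asserts, applied to the given $A$ (maximal monotone of type (FPV) with convex domain) and the closed convex set $C$ under the qualification $\dom A \cap \inte C \neq \varnothing$.

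With all three hypotheses of Theorem~\ref{t:main} verified for the pair $(A, N_C)$, the theorem yields that $A + N_C$ is maximal monotone, which is the conclusion. The main obstacle is not really a calculation but rather ensuring that the logical dependency is sound: Corollary~\ref{domain:L1} itself relied on Fact~\ref{ft:maina}, which is the statement we are re-deriving here; however, since the main result Theorem~\ref{t:main} was proved independently and only invokes Corollary~\ref{domain:L1} at this final step, no circularity arises in the overall chain of the paper. Thus the proof reduces to a one-line application:

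\begin{proof}
Combine Corollary~\ref{domain:L1} with Theorem~\ref{t:main} applied to $A$ and $B := N_C$.
\end{proof}
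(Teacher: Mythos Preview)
Your proof is correct and follows essentially the same route as the paper: the paper invokes Corollary~\ref{Coro:00a1}, whose own proof combines Corollary~\ref{domain:L1} with Theorem~\ref{t:main}, which is precisely what you do directly for $B=N_C$. Your remark on the logical dependency is also apt---both your argument and the paper's pass through Fact~\ref{ft:maina}, so Corollary~\ref{Coro:01} is best read as showing that Voisei's result is subsumed by the framework of Theorem~\ref{t:main} rather than as an independent reproof.
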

\begin{proof}
Apply Corollary~\ref{Coro:00a1}.
\end{proof}

\begin{corollary}\label{Coro:002de}
Let $A:X\To X^*$ be a maximal monotone linear relation,
and let $B:X\To X^*$ be maximal monotone
with $\dom A\cap\inte\dom B\neq\varnothing$.
Assume that $\dom A\cap\overline{\dom B}\subseteq\dom B$.
Then $A+B$ is maximal monotone.
\end{corollary}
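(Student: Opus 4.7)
The plan is to recognize that this statement is a straightforward consequence of results already established in the paper, and simply to verify that the hypotheses of an earlier corollary are met. There is no new analytic work to do; the task is purely one of unpacking the structural properties of a maximal monotone linear relation.

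First I would observe that because $A$ is a maximal monotone linear relation, $\gra A$ is a linear subspace of $X \times X^*$, so in particular $\dom A = P_X(\gra A)$ is a linear subspace of $X$ and is therefore convex. Second, by Fact~\ref{f:referee01}, every maximal monotone linear relation is of type (FPV); hence $A$ is a maximal monotone operator of type (FPV) with convex domain. At this point the hypotheses ``$A$ is maximal monotone of type (FPV) with convex domain'', ``$\dom A \cap \inte\dom B \neq \varnothing$'', and ``$\dom A \cap \overline{\dom B} \subseteq \dom B$'' of Corollary~\ref{Coro:00a1} are all satisfied, and applying that corollary yields that $A+B$ is maximal monotone.

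If one prefers to reduce to Theorem~\ref{t:main} directly rather than through Corollary~\ref{Coro:00a1}, the same two observations suffice: Corollary~\ref{domain:L2}, applied with $C = \overline{\dom B}$ (which is a nonempty closed convex subset of $X$ by Fact~\ref{f:referee02c}, since $\inte\dom B \neq \varnothing$), gives that $A + N_{\overline{\dom B}}$ is of type (FPV), and one then feeds this together with the two assumed qualifications into Theorem~\ref{t:main}.

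There is no real obstacle here: the entire content of the proof is that the linear-relation hypothesis lets us invoke Fact~\ref{f:referee01} for free, at which point a previously proved corollary does all the work. The only point that might seem to need verification, namely convexity of $\dom A$, is immediate from the definition of a linear relation.
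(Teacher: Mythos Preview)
Your proof is correct and matches the paper's own argument exactly: the paper's proof reads ``Combine Fact~\ref{f:referee01} and Corollary~\ref{Coro:00a1}'', which is precisely your first route (use Fact~\ref{f:referee01} to get that $A$ is of type (FPV), note $\dom A$ is convex, and apply Corollary~\ref{Coro:00a1}). The alternative direct reduction to Theorem~\ref{t:main} via Corollary~\ref{domain:L2} that you sketch is also valid and is essentially how Corollary~\ref{Coro:00a1} itself is proved.
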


\begin{proof}
Combine Fact~\ref{f:referee01} and Corollary~\ref{Coro:00a1}.
\end{proof}

\begin{corollary}\emph{(See \cite[Theorem~3.1]{BWY4}.)}\label{Coro:1}
Let $A:X\To X^*$ be a maximal monotone linear relation,
let $C$ be a nonempty closed convex subset of $X$,
and suppose that $\dom A \cap \inte C\neq \varnothing$.
Then $A+N_C$ is maximal monotone.
\end{corollary}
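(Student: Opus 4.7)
The plan is to reduce this corollary to Corollary~\ref{Coro:002de} by taking $B := N_C$ there, since $N_C$ is a maximal monotone operator whose domain is $C$.

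First I would check that $N_C$ qualifies as $B$ in Corollary~\ref{Coro:002de}. Since $C$ is a nonempty closed convex subset of $X$, we have $N_C = \partial \iota_C$, and because $\iota_C$ is proper, lower semicontinuous, and convex, $N_C$ is maximal monotone (a standard fact, invoked elsewhere in the paper via Fact~\ref{f:referee0d}). Moreover $\dom N_C = C$, so $\overline{\dom N_C} = C$ (since $C$ is closed) and $\inte \dom N_C = \inte C$.

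Next I would verify the two quantitative hypotheses of Corollary~\ref{Coro:002de}. The constraint qualification $\dom A \cap \inte \dom N_C \neq \varnothing$ becomes $\dom A \cap \inte C \neq \varnothing$, which is the standing hypothesis. The absorption condition $\dom A \cap \overline{\dom N_C} \subseteq \dom N_C$ becomes $\dom A \cap C \subseteq C$, which is trivially true.

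With both hypotheses in place, Corollary~\ref{Coro:002de} applies and yields that $A + N_C$ is maximal monotone, which is precisely the conclusion. There is no real obstacle here; the corollary is a direct specialization of Corollary~\ref{Coro:002de} to the normal cone operator, and essentially all of the work has already been absorbed into Theorem~\ref{t:main} and its linear-relation consequence.
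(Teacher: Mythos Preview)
Your proof is correct and follows exactly the paper's own approach: the paper's proof of this corollary is the single line ``Apply Corollary~\ref{Coro:002de}'', and you have simply spelled out the routine verifications (that $N_C$ is maximal monotone with $\dom N_C=C$, so $\inte\dom N_C=\inte C$ and $\dom A\cap\overline{\dom N_C}\subseteq\dom N_C$ is trivial) needed to invoke it.
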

\begin{proof}
Apply Corollary~\ref{Coro:002de}.
\end{proof}

\begin{corollary}\label{Coro:002}
Let $A:X\To X^*$ be a maximal monotone linear relation,
and let $B:X\To X^*$ be maximal monotone  with full domain.
Then $A+B$ is maximal monotone.
\end{corollary}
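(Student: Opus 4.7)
The overall strategy is to observe that this corollary is a direct specialization of the sum results already obtained, once we exploit the extra structure $A$ carries as a linear relation. The key auxiliary input is Fact~\ref{f:referee01}, which tells us that every maximal monotone linear relation is automatically of type (FPV). Once that observation is in hand, the statement reduces to a case that has been treated above.

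Concretely, my plan is: first invoke Fact~\ref{f:referee01} to conclude that $A$ is of type (FPV); then apply Corollary~\ref{Coro:001} with this $A$ and the given $B$, whose domain is all of $X$. Corollary~\ref{Coro:001} outputs maximal monotonicity of $A+B$ with no further work. One-line proof.

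An equivalent but slightly more verbose route goes through Corollary~\ref{Coro:002de}. Since $\dom B=X$, we have $\dom A\cap\inte\dom B=\dom A$, which is nonempty (the graph $\gra A$ is a linear subspace of $X\times X^*$, so $(0,0)\in\gra A$ and in particular $0\in\dom A$), and $\dom A\cap\overline{\dom B}=\dom A\subseteq X=\dom B$. Thus both structural hypotheses of Corollary~\ref{Coro:002de} are automatic, and it delivers the conclusion. I would prefer the first route for brevity, but either works.

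There is essentially no obstacle: the entire content of the corollary is the pairing of ``linear relation $\Rightarrow$ (FPV)'' (Fact~\ref{f:referee01}) with the already-established sum theorem for a (FPV) operator and a full-domain maximal monotone operator (Corollary~\ref{Coro:001}). No analytic estimate, no new Fitzpatrick-function computation, and no separate verification of constraint qualifications is needed, since $\dom B=X$ trivializes them.
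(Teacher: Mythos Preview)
Your proposal is correct, and your second route---applying Corollary~\ref{Coro:002de} after noting that $\dom B=X$ trivializes both hypotheses---is exactly the paper's proof. Your preferred first route via Fact~\ref{f:referee01} and Corollary~\ref{Coro:001} is an equally valid one-line variant; the two differ only in which earlier corollary absorbs the ``linear relation $\Rightarrow$ (FPV)'' step.
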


\begin{proof}
Apply Corollary~\ref{Coro:002de}.
\end{proof}

\begin{example}
\label{ex:main}
Suppose that $X=L^1[0,1]$, let $$D=\menge{x\in X}{\text{$x$ is absolutely
continuous}, x(0)=0, x'\in X^*},$$
and set
\begin{equation*}
A\colon X\To X^*\colon
x\mapsto \begin{cases}
\{x'\}, &\text{if $x\in D$;}\\
\varnothing, &\text{otherwise.}
\end{cases}
\end{equation*}
By Phelps and Simons' \cite[Example~4.3]{PheSim},
$A$ is an at most single-valued maximal monotone linear relation
with proper dense domain, and $A$ is neither symmetric nor skew.
Now let $J$ be the \emph{duality mapping},
i.e., $J=\partial\tfrac{1}{2}\|\cdot\|^2$.
Then Corollary~\ref{Coro:002} implies that $A+J$ is maximal monotone.
To the best of our knowledge, the maximal monotonicity of
$A+J$ cannot be deduced from
any previously known result.
\end{example}

\begin{remark}
In \cite{BWY9},
it was shown that
the sum theorem is true when $A$ is a linear relation, $B$ is the subdifferential operator
of a proper lower semicontinuous sublinear function, and Rockafellar's
constraint qualification holds.
When the domain of the subdifferential operator is closed, then that result
can be deduced from  Theorem~\ref{t:main}.
However, it is possible that the domain of the subdifferential operator
of a proper lower semicontinuous sublinear function does not have to  be closed.
For an example, see \cite[Example~5.4]{BBBRW}:
Set $C=\{(x,y)\in\RR^2\mid 0<1/x\leq y\}$ and $f=\iota^*_C$.
Then $f$ is not subdifferentiable at any point in the boundary of its domain,
except at the origin.
Thus, in the general case,
we do not know whether or not it is possible to deduce the result in
\cite{BWY9} from Theorem~\ref{t:main}.
\end{remark}

\section*{Acknowledgment}
The author thanks Dr.\ Heinz Bauschke and Dr.\ Xianfu Wang for their
valuable discussions and comments. The author also thanks
Dr.\ Robert Csetnek for his pertinent comments.

\end{document}